\theoremstyle{theorem}
\newtheorem{theorem}{Theorem}[section]
\newtheorem{lemma}{Lemma}[section]
\newtheorem{corollary}{Corollary}[section]
\theoremstyle{definition}
\newtheorem{definition}{Definition}[section]
\theoremstyle{definition}
\newtheorem{example}{Example}[section]
\theoremstyle{definition}
\numberwithin{equation}{section}
\begin{document}
	\title{On The Generalisation of Henstock-Kurzweil Fourier Transform}
	\author{S. Mahanta$^{1}$,\quad S. Ray$^{2}$\vspace{2mm}\\
		\em\small ${}^{1,2}$Department of Mathematics, Visva-Bharati, 731235, India.\\
		\em\small ${}^{1}$e-mail: sougatamahanta1@gmail.com\\
		\em\small ${}^{2}$e-mail: subhasis.ray@visva-bharati.ac.in}
	
	\date{}
	\maketitle
	
	\begin{abstract}
		In this paper, a generalised integral called the Laplace integral is defined on unbounded intervals, and some of its properties, including necessary and sufficient condition for differentiating under the integral sign, are discussed. It is also shown that this integral is more general than the Henstock-Kurzweil integral. Finally, the Fourier transform is defined using the Laplace integral, and its well-known properties are established.
	\end{abstract}
	\thanks{\bf Keywords:} Fourier transform, Henstock-Kurzweil integral, Denjoy integral, Laplace integral, Laplace derivative.\\\\
	\thanks{\bf Mathematics Subject Classification(2020):} 42A38, 26A39.

	\section{Introduction}
	If $f\colon\mathbb{R}\to\mathbb{R}$ is Lebesgue integrable, its Fourier transform is defined by $$\widehat{f}(y)=\int_{-\infty}^{\infty}f(x)e^{-2\pi iyx},$$ and its theory is also well established. Now the obvious question from the viewpoint of ``generalised integrals" is ``{\em can we replace the Lebesgue integral with a generalised integral in the definition of Fourier transform?"} It is Erik Talvila who first gave an affirmative answer to the above question in \cite{HKFT}. He used Henstock-Kurzweil integral to define the Fourier transform and proved its important properties. Furthermore, he pointed out that some beautiful results, e.g., Riemann-Lebesgue lemma, are not satisfied by the Henstock-Kurzweil Fourier transform. However, in \cite{mendoza2009some}, it is proved that the Riemann-Lebesgue lemma is satisfied in an appropriate subspace of the space of all Henstock-Kurzweil integrable functions on $\mathbb{R}$. Further results concerning Henstock-Kurzweil Fourier transform can be found in \cite{arredondo2020fourier, arredondo2018norm, mendoza2016note, morales2016extension, torres2015convolution, torres2010pointwise}.\vspace{2mm}
	
	\par In \cite{mahanta2021generalised}, a new generalised integral on bounded intervals called the Laplace integral is defined by the authors of this paper, which has continuous primitives and is more general than the Henstock-Kurzweil integral. In this paper, the concept of the Laplace integral is extended on $\mathbb{R}$ and then applied to define the Fourier transform. Moreover, the essential properties of the Fourier transform are studied in this general setting.\vspace{3mm}

	\noindent{\bf Notations:} Let $I$ be an interval bounded or unbounded. We use following notations throughout this paper.
	\begin{align*}
	&L^{1}(I)=\left\{ f\mid f\,\, \text{is Lebesgue integrable on $I$} \right\},\\
	&\mathcal{HK}(I)=\left\{ f\mid f\,\, \text{is Henstock-Kurzweil integrable on $I$} \right\},\\
	&\mathcal{BV}(I)=\left\{ f\mid f\,\, \text{is of bounded variation on $I$} \right\},\\
	&\mathcal{BV}(\pm\infty)=\left\{ f\mid f\,\, \text{is of bounded variation on $\mathbb{R}\setminus (-a,a)$ for some $a\in\mathbb{R}$}\right\},\\
	&V_{I}[f]= \text{Total variation of $f$ on $I$,}\\
	&V_{I}[f(\cdot, y)]= \text{Total variation of $f$ with respect to $x$ on $I$,}\\
	&\|f\|_{1}= \text{The $L^{1}$ norm of $f$}.
	\end{align*}

	\section{Preliminaries}
	\begin{definition}[\cite{mahanta2021generalised}]
		Let $f$ be Laplace integrable (see Definition 3.2 of \cite{mahanta2021generalised}) on a neighbourhood of $x$. If $\exists\, \delta>0$ such that the following limits
		\begin{equation*} 
		\lim\limits_{s\rightarrow \infty}s\int_{0}^{\delta}e^{-st}f(x+t)\,dt\quad\text{and}\quad\lim\limits_{s\rightarrow \infty}s\int_{0}^{\delta}e^{-st}f(x-t)\,dt
		\end{equation*}
		exist and are equal, then the common value is denoted by $LD_{0}f(x)$. And we say $f$ is Laplace continuous at $x$ if $LD_{0}f(x)=f(x)$.
	\end{definition}
	
	\begin{definition}
		Let $f$ be Laplace integrable on a neighbourhood of $x$. If $\exists\, \delta>0$ such that the following limits
		\begin{equation*}
		\lim\limits_{s\rightarrow \infty}s^{2}\int_{0}^{\delta}e^{-st}[f(x+t)-f(x)]\,dt\quad\text{and}\quad\lim\limits_{s\rightarrow \infty}(-s^{2})\int_{0}^{\delta}e^{-st}[f(x-t)-f(x)]\,dt
		\end{equation*}
		exist and are equal, then we say $f$ is Laplace differentiable at $x$ and the common value is denoted by $LD_{1}f(x)$.
	\end{definition} 
	\noindent If $f$ is a function of two variables, say $x$ and $y$, then we define the Laplace derivative of $f$ with respect to $x$ as the Laplace derivative of $f_{y}(x)=f(x,y)$ ($y$ is assumed to be constant) and we denote it by $LD_{1x}f$. Definition of $LD_{1y}f$ is similar. For further properties on Laplace derivative and Laplace continuity see \cite{OLD, TLD1, TLD2, OLC}.

	\section{The Laplace integral on unbounded intervals}
	In Definition 3.4 of \cite{borkowski2018applications}, Denjoy-Perron type integrals on unbounded intervals are defined and then it is proved that it is equivalent to Henstock-Kurzweil integral on unbounded intervals (Theorem 3.2 of \cite{borkowski2018applications}). Similarly, we shall define the Laplace integral on unbounded intervals and establish its properties. Due to similarity, most of the results will be given without proof.

	\begin{definition}\label{Laplace integral}
		Let $I=[a,\infty]$ and let $f:I\to\mathbb{R}$. Then we say $f$ is Laplace integrable on $[a,\infty)$ or on $[a,\infty]$ if
		\begin{enumerate}[\upshape (a)]
			\item $f$ is Laplace integrable on $[a,c]$ for $c\geqslant a$ and
			\item $\lim\limits_{c\rightarrow\infty}\int_{a}^{c}f$ exists.
		\end{enumerate} 
		In this case we write $\int_{a}^{\infty}f=\lim\limits_{c\rightarrow\infty}\int_{a}^{c}f$. The set of all Laplace integrable functions on $[a,\infty)$ will be denoted by $\mathcal{LP}[a,\infty)$ or by $\mathcal{LP}[a,\infty]$.
	\end{definition}
	
	\noindent Integrability on $(-\infty,b]$ or on $[-\infty,b]$ can be defined analogously. We shall say that $f$ is integrable on $\mathbb{R}$ or on $\overline{\mathbb{R}}$ (\,$=[-\infty,\infty]$\,) if there is some $a\in \mathbb{R}$ such that $f$ is integrable on both $(-\infty,a]$ and $[a,\infty)$, and we write $\int_{-\infty}^{\infty}f=\int_{-\infty}^{a}f+\int_{a}^{\infty}f$. From now on, we assume all integrals are Laplace integral unless otherwise stated.\vspace{2mm}
	\par From Section 3 of \cite{mahanta2021generalised} and Definition 3.4, Theorem 3.2 of \cite{borkowski2018applications}, it is evident that Denjoy-Perron integral or Henstock-Kurzweil integral on unbounded intervals is a particular case of Laplace integral; however, the following example will ensure that $\mathcal{LP}(\mathbb{R})\setminus\mathcal{HK}(\mathbb{R})$ is non-empty.

	\begin{example}\label{TheOnleExample}
		Let $f:[0,1]\to\mathbb{R}$ be the function defined in Equation (4.12) of \cite{mahanta2021generalised} and let $\phi=LD_{1}f$ on $[0,1]$ (see Theorem 4.2 of \cite{mahanta2021generalised}). Now define
		\[
		F(x)=
		\begin{cases}
		f(x)\quad& \text{if $x\in [0,1]$}\\
		\,\,\,\,0\quad& \text{if $x\in\mathbb{R}\setminus [0,1]$}.
		\end{cases}
		\]
		Then $F$ is continuous and Laplace differentiable on $\mathbb{R}$. If we denote $LD_{1}F$ by $\Phi$, then we get
		\[
		\Phi(x)=
		\begin{cases}
		\phi(x)\quad& \text{if $x\in [0,1]$}\\
		\,\,\,\,0\quad& \text{if $x\in\mathbb{R}\setminus [0,1]$}.
		\end{cases}
		\]
		By Theorem 4.3 of \cite{mahanta2021generalised} and Definition \ref{Laplace integral}, we get that $\Phi\in \mathcal{LP}(\mathbb{R})\setminus\mathcal{HK}(\mathbb{R})$.
	\end{example}
	
	\begin{theorem}[{\bf Cauchy criterion}]\label{Cauchy criterion}
		Let $I=[a,\infty]$ and let $f:I\to\mathbb{R}$ be such that $f\in\mathcal{LP}[a,c]$ for all $c\geqslant a$. Then $f\in\mathcal{LP}(I)$ if and only if for any $\epsilon>0$ there is a $K(\epsilon)\geqslant a$ such that $q>p\geqslant K(\epsilon)$ implies $|\int_{p}^{q}f|\leqslant \epsilon$.
	\end{theorem}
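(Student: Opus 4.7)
The plan is to reduce the statement to the classical Cauchy criterion for the existence of the limit of a real-valued function at infinity, applied to the primitive $F(c)=\int_{a}^{c}f$, which is well-defined on $[a,\infty)$ by hypothesis. The key auxiliary fact I will need is that the Laplace integral on bounded intervals is additive over adjoining subintervals, so that for any $a\leqslant p<q$ one has $\int_{p}^{q}f=F(q)-F(p)$; this should follow from the basic properties of the Laplace integral developed in \cite{mahanta2021generalised} (additivity of the underlying primitives under the Laplace derivative).

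For the forward implication, assume $f\in\mathcal{LP}[a,\infty]$, so that $L:=\lim_{c\to\infty}F(c)$ exists. Given $\epsilon>0$ I pick $K(\epsilon)\geqslant a$ with $|F(c)-L|\leqslant\epsilon/2$ for every $c\geqslant K(\epsilon)$. Then for $q>p\geqslant K(\epsilon)$, using additivity,
\[
\Bigl|\int_{p}^{q}f\Bigr|=|F(q)-F(p)|\leqslant |F(q)-L|+|L-F(p)|\leqslant \epsilon,
\]
which is the required inequality.

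For the converse, assume the Cauchy condition. I first show that $F$ has a limit along any sequence $c_{n}\to\infty$: given $\epsilon>0$, choose $K(\epsilon)$ as provided, pick $N$ with $c_{n}\geqslant K(\epsilon)$ for $n\geqslant N$, and observe that for $m>n\geqslant N$, $|F(c_{m})-F(c_{n})|=|\int_{c_{n}}^{c_{m}}f|\leqslant\epsilon$, so $\{F(c_{n})\}$ is Cauchy in $\mathbb{R}$ and therefore converges. A standard argument (comparing two sequences by interleaving them) shows the limit is independent of the chosen sequence, and one final use of the Cauchy condition upgrades convergence along sequences to the full limit $\lim_{c\to\infty}F(c)$. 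Hence $f\in\mathcal{LP}[a,\infty]$.

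The only real obstacle is the appeal to additivity of the Laplace integral over adjoining bounded subintervals; once this is in hand the remainder is the textbook reduction of an improper-integral Cauchy criterion to the Cauchy criterion for limits of functions at infinity, and this is exactly why the author flags the proof as routine in view of the parallel development in \cite{borkowski2018applications}.
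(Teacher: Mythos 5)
Your argument is correct and is exactly the standard reduction the paper has in mind: the paper itself omits the proof (citing the parallel development for Denjoy--Perron/Henstock--Kurzweil integrals on unbounded intervals), and the intended argument is precisely yours, namely additivity of the Laplace integral over adjoining bounded subintervals (established in \cite{mahanta2021generalised}) plus the Cauchy criterion for the existence of $\lim_{c\to\infty}F(c)$. No gaps; the one auxiliary fact you flag is indeed available from the cited earlier work.
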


	\begin{theorem}
		Let $I=[a,\infty]$. Then 
		\begin{enumerate}[\upshape (a)]
			\item if $f,g \in \mathcal{LP}(I)$ and $c\in\mathbb{R}$, then $cf+g\in \mathcal{LP}(I)$ and $\int_{I}(cf + g)=c\int_{I}f + \int_{I}g$.
			\item if $f,g \in \mathcal{LP}(I)$ and $f\leqslant g$, then $\int_{I}f\leqslant\int_{I}g$.
			\item if $c\in I$ and $f \in \mathcal{LP}(I)$, then $\int_{a}^{\infty}f=\int_{a}^{c}f+\int_{c}^{\infty}f$.
		\end{enumerate}
	\end{theorem}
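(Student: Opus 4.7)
The plan is to reduce all three claims to the analogous statements on bounded intervals, which are part of the theory of the Laplace integral on compact subintervals developed in \cite{mahanta2021generalised}, and then pass to the limit as the right endpoint tends to infinity.

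For part (a), I would first apply linearity of the Laplace integral on $[a,c]$ to conclude that $cf+g \in \mathcal{LP}[a,c]$ for every $c \geqslant a$, with $\int_a^c(cf+g) = c\int_a^c f + \int_a^c g$. Since both $\lim_{c\to\infty}\int_a^c f$ and $\lim_{c\to\infty}\int_a^c g$ exist by hypothesis, the limit of the right-hand side exists and equals $c\int_I f + \int_I g$. This verifies both conditions (a) and (b) of Definition \ref{Laplace integral} for $cf+g$ and simultaneously gives the claimed formula.

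For part (b), the cleanest route is to combine (a) with the corresponding monotonicity statement on bounded intervals. By (a), $g-f \in \mathcal{LP}(I)$. Since $g-f \geqslant 0$ pointwise and $g-f \in \mathcal{LP}[a,c]$ for each $c\geqslant a$, monotonicity on compact intervals gives $\int_a^c(g-f) \geqslant 0$. Passing to the limit $c\to\infty$ preserves the inequality, so $\int_I g - \int_I f \geqslant 0$.

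For part (c), with $c \in I$ fixed and $b \geqslant c$, additivity on bounded intervals yields $\int_a^b f = \int_a^c f + \int_c^b f$, whence $\int_c^b f = \int_a^b f - \int_a^c f$. Because $\lim_{b\to\infty}\int_a^b f$ exists by hypothesis, the right side has a limit as $b\to\infty$, so $f \in \mathcal{LP}[c,\infty)$ and letting $b\to\infty$ in the displayed equality produces the stated decomposition.

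There is no real obstacle here; the only point that requires a moment's care is the implicit appeal in (c) to the fact that $f \in \mathcal{LP}[a,c]$ for every $c \geqslant a$ forces $f \in \mathcal{LP}[c,b]$ for every $b \geqslant c$, which follows from additivity of the Laplace integral on bounded intervals (part of the bounded-interval theory in \cite{mahanta2021generalised}). In line with the author's remark that most proofs parallel the Henstock--Kurzweil case and are omitted, the entire argument is essentially a one-line limit passage in each part.
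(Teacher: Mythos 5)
Your proposal is correct and is exactly the argument the paper intends: the theorem is stated without proof precisely because, as you do, one reduces each part to the bounded-interval linearity, monotonicity, and additivity results of the Laplace integral and then passes to the limit as the right endpoint tends to infinity, mirroring the Henstock--Kurzweil case. The only blemish is notational: in part (a) you use $c$ both for the scalar and for the moving right endpoint (and in part (b)/(c) the roles of $c$ and $b$ shift), so a cleaner write-up should rename the endpoint, but the mathematics is sound.
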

	
	\begin{theorem}[{\bf Fundamental theorem of calculus}]\label{Fundmental}
		Let $I=[a,\infty]$ and let $f:I\to\mathbb{R}$. If $f\in \mathcal{LP}(I)$ and $F(x)=\int_{a}^{x}f$, then $LD_{1}F=f$ a.e. on $I$.
	\end{theorem}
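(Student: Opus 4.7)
The plan is to reduce this to the already-established Fundamental Theorem of Calculus for the Laplace integral on bounded intervals (from Section 3 of \cite{mahanta2021generalised}), exploiting the fact that the Laplace derivative $LD_1$ is a purely local operation. Fix an arbitrary finite $b>a$. By part (a) of Definition \ref{Laplace integral}, $f$ is Laplace integrable on $[a,b]$, and the restriction $F\big|_{[a,b]}$ is precisely the Laplace primitive $x\mapsto\int_a^x f$ on $[a,b]$. The bounded-interval FTC therefore delivers $LD_1\bigl(F\big|_{[a,b]}\bigr)(x)=f(x)$ for almost every $x\in[a,b]$.

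Next I would upgrade this to a statement about $LD_1 F$ itself, rather than its restriction. Inspecting Definition 2.2, the quantity $LD_1 F(x)$ is determined by the values of $F$ on a set of the form $[x-\delta,x+\delta]$ for some $\delta>0$. For any $x\in(a,b)$ one can shrink $\delta$ so that $[x-\delta,x+\delta]\subset(a,b)$; on this neighbourhood $F$ and $F\big|_{[a,b]}$ coincide, so $LD_1 F(x)$ exists and equals $LD_1\bigl(F\big|_{[a,b]}\bigr)(x)$. Combining with the previous paragraph, $LD_1 F=f$ almost everywhere on $(a,b)$.

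To finish, I would take $b=a+n$ and let $n$ range over the positive integers. Each application produces a null exceptional subset of $(a,a+n)$, and a countable union of null sets is null, so $LD_1 F=f$ almost everywhere on $[a,\infty)$, hence on $I=[a,\infty]$ since $\{\infty\}$ is negligible. The only real subtlety is the locality step in the middle paragraph; the rest is a routine exhaustion of $[a,\infty)$ by bounded subintervals, and no genuine obstacle arises because Definition 2.2 depends solely on arbitrarily small one-sided neighbourhoods of the point.
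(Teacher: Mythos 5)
Your proposal is correct and follows essentially the same route as the paper: exhaust $[a,\infty)$ by bounded subintervals, apply the bounded-interval fundamental theorem from \cite{mahanta2021generalised} on each, and use that a countable union of null sets is null. Your explicit treatment of the locality of $LD_{1}$ (restriction versus the full function) is a detail the paper leaves implicit, but it does not change the argument.
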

	\begin{proof}
		Let $I_{0}=[a,a+1)$ and $I_{n}=[a+n, a+n+1)$, $n\in \mathbb{N}$. Then $I=\bigcup_{n=0}^{\infty}I_{n}$. Now apply Theorem 5.5 of \cite{mahanta2021generalised} on each $I_{n}$.
	\end{proof}

	\begin{theorem}[{\bf Du Bois-Reymond's}]
		Let $I=[a,\infty]$, let $f:I\to\mathbb{R}$ and let $g:I\to\mathbb{R}$. If
		\begin{enumerate}[\upshape (a)]
			\item $f\in \mathcal{LP}[a,c]$ for all $c\geqslant a$ and $F(x)=\int_{a}^{x}f$ is bounded on $[a,\infty]$,
			\item $g\in L^{1}(I)\cap\mathcal{BV}(I)$ and $G(x)=\int_{a}^{x}g$ for all $x\in I$,
			\item $\lim\limits_{x\rightarrow \infty}F(x)G(x)$ exists,
		\end{enumerate}
		then $fG\in \mathcal{LP}(I)$.
	\end{theorem}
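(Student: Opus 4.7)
The plan is to reduce the question to the existence of $\lim_{c\to\infty}\int_{a}^{c} fG$ via integration by parts on bounded intervals, after which the hypotheses decouple cleanly into one piece handled by assumption (c) and one piece handled by ordinary Lebesgue integration.

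First I would verify that $fG\in\mathcal{LP}[a,c]$ for every $c\geqslant a$, so that the truncated integrals $\int_{a}^{c} fG$ make sense. Since $g\in L^{1}(I)$, the function $G$ is absolutely continuous on every compact subinterval of $I$, hence continuous and of bounded variation there; combined with $f\in\mathcal{LP}[a,c]$ from (a), the multiplier/integration-by-parts machinery for the Laplace integral on bounded intervals developed in \cite{mahanta2021generalised} yields $fG\in\mathcal{LP}[a,c]$ together with the formula
\begin{equation*}
\int_{a}^{c} f(x)G(x)\,dx \;=\; F(c)G(c)-F(a)G(a)-\int_{a}^{c} F(x)\,dG(x).
\end{equation*}
Since $G$ is absolutely continuous on $[a,c]$ with derivative $g$ almost everywhere, $dG = g\,dx$ and $F(a)=0$, so the identity simplifies to
\begin{equation*}
\int_{a}^{c} fG \;=\; F(c)G(c)-\int_{a}^{c} Fg.
\end{equation*}

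Next I would pass to the limit $c\to\infty$. The first term tends to a finite value by hypothesis (c). For the second term, $F$ is bounded on $I$ by some constant $M$ thanks to (a), and $g\in L^{1}(I)$ by (b); hence $|Fg|\leqslant M|g|$ shows $Fg\in L^{1}(I)$, and in particular $\lim_{c\to\infty}\int_{a}^{c} Fg$ exists as an absolutely convergent Lebesgue integral. Therefore $\lim_{c\to\infty}\int_{a}^{c} fG$ exists, which is exactly the assertion $fG\in\mathcal{LP}(I)$ by Definition \ref{Laplace integral}.

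The main obstacle is the integration-by-parts step on $[a,c]$: one has to invoke the correct companion result from \cite{mahanta2021generalised} for the Laplace integral (with $f$ only Laplace integrable and $G$ absolutely continuous of bounded variation). Once that identity is in hand, the remaining analysis is routine, because the role of $L^{1}\cap\mathcal{BV}$ is precisely to split the boundary behaviour (handled by (c)) from the interior behaviour (handled by the Lebesgue-integrability of $Fg$). As an alternative route that avoids committing to a specific integration-by-parts statement, one could instead verify the Cauchy criterion of Theorem \ref{Cauchy criterion} directly: for $q>p\geqslant K$, write $\int_{p}^{q} fG = [F(q)-F(p)]G(q) + \int_{p}^{q}(F(p)-F)g$, bound the first term using the existence of $\lim F\cdot G$ at infinity, and the second term using boundedness of $F-F(p)$ together with $g\in L^{1}$.
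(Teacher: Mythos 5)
Your proposal is correct and follows essentially the same route as the paper: integration by parts on $[a,c]$ (Theorem 6.1 of \cite{mahanta2021generalised}) to write $\int_{a}^{c}fG=F(c)G(c)-\int_{a}^{c}Fg$, then boundedness of $F$ together with $g\in L^{1}(I)$ gives $Fg\in L^{1}(I)$, and hypothesis (c) supplies the limit of the boundary term. The extra Cauchy-criterion variant you sketch is a fine alternative but not needed.
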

	\begin{proof}
		As $F$ is bounded, $Fg\in L^{1}(I)$ which implies $\lim_{x\rightarrow\infty}\int_{a}^{x}Fg$ exists and equal to $\int_{a}^{\infty}Fg$. Let $x\in[a,\infty)$, then by Theorem 6.1 of \cite{mahanta2021generalised}, we have
		\begin{equation}\label{Eq Du Bois-Reymond's}
		\int_{a}^{x}fG=F(x)G(x)-\int_{a}^{x}Fg.
		\end{equation}
		Now applying the last assumption on \eqref{Eq Du Bois-Reymond's}, we get $fG\in \mathcal{LP}(I)$.
	\end{proof}
	\begin{corollary}[\bf Integration by parts]\label{Int by parts}
		Let $I=[a,\infty]$, let $f\in \mathcal{LP}(I)$, let $g\in L^{1}(I)\cap\mathcal{BV}(I)$ and let $G(x)=\int_{a}^{x}g$, $x\in I$. Then $fG\in \mathcal{LP}(I)$ and
		\begin{equation*}
		\int_{a}^{\infty}fG=\lim\limits_{x\rightarrow\infty}[F(x)G(x)]-\int_{a}^{\infty}Fg,
		\end{equation*} 
		where $F(x)=\int_{a}^{x}f$.
	\end{corollary}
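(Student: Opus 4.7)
The plan is to deduce this corollary directly from the Du Bois-Reymond theorem just established, by verifying that its three hypotheses are satisfied under the stronger assumption $f\in\mathcal{LP}(I)$, and then passing to the limit in the identity \eqref{Eq Du Bois-Reymond's}.

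First I would check hypothesis (a): the definition of $\mathcal{LP}[a,\infty]$ already builds in $f\in\mathcal{LP}[a,c]$ for every $c\geqslant a$, so only the boundedness of $F$ on $[a,\infty]$ needs attention. Since $f\in\mathcal{LP}(I)$, the limit $\lim_{x\to\infty}F(x)=\int_{a}^{\infty}f$ exists and is finite, so $F$ is bounded on a tail $[M,\infty)$; on the compact piece $[a,M]$ one uses continuity of the Laplace primitive (a property carried over from the companion paper via Theorem \ref{Cauchy criterion}). Hypothesis (b) is precisely the hypothesis on $g$. For (c), $g\in L^{1}(I)$ yields $\lim_{x\to\infty}G(x)=\int_{a}^{\infty}g$, and combining this with the existence of $\lim_{x\to\infty}F(x)$ guarantees that $\lim_{x\to\infty}F(x)G(x)$ exists.

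With all three hypotheses in place, Du Bois-Reymond's theorem gives $fG\in\mathcal{LP}(I)$ and the identity
\[
\int_{a}^{x}fG=F(x)G(x)-\int_{a}^{x}Fg
\]
for every $x\in[a,\infty)$. Letting $x\to\infty$, the left side tends by definition to $\int_{a}^{\infty}fG$, the first term on the right tends to $\lim_{x\to\infty}F(x)G(x)$, and the second term tends to $\int_{a}^{\infty}Fg$ because $Fg\in L^{1}(I)$ (already noted in the proof of Du Bois-Reymond's theorem, as $F$ is bounded and $g\in L^{1}(I)$). This yields the stated integration-by-parts formula.

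The only mildly delicate point is the global boundedness of $F$ on $[a,\infty]$, which is precisely where one needs the convergence of the improper integral $\int_{a}^{\infty}f$ together with continuity of the primitive; everything else is a routine limit transfer from the bounded-$F$ case already handled.
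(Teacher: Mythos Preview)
Your proposal is correct and is precisely the intended argument: the paper states this as an immediate corollary of Du Bois-Reymond's theorem (with no separate proof), and your verification of hypotheses (a)--(c) together with passage to the limit in \eqref{Eq Du Bois-Reymond's} is exactly the justification the paper leaves implicit.
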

	
	\begin{lemma}\label{Holder}
		Let $[a,b]\subseteq \mathbb{R}$, let $f\in \mathcal{LP}(\mathbb{R})$ and let $g\in \mathcal{BV}(\mathbb{R})$. If $G(x)=\int_{a}^{x}g$, then
		\begin{equation*}
		\left|\int_{a}^{b}fG\right|\leqslant \left|\int_{a}^{b}f\right|\inf\limits_{x\in[a,b]} |G(x)| + \|f\|_{[a,b]}V_{[a,b]}[G]\leqslant \left[\inf\limits_{x\in[a,b]} |G(x)| + V_{[a,b]}[G]\right]\|f\|_{[a,b]}.
		\end{equation*}
	\end{lemma}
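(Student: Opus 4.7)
The plan is to exploit the continuity of $G$ (which is absolutely continuous on $[a,b]$ since $g\in\mathcal{BV}(\mathbb{R})$ is bounded) together with compactness of $[a,b]$ to choose $x_{0}\in[a,b]$ with $|G(x_{0})|=\inf_{x\in[a,b]}|G(x)|$. Setting $H:=G-G(x_{0})$, I would write
$$\int_{a}^{b}fG \;=\; G(x_{0})\int_{a}^{b}f \;+\; \int_{a}^{b}fH,$$
whose first summand already contributes $\inf_{[a,b]}|G|\cdot\bigl|\int_{a}^{b}f\bigr|$ after taking absolute values. The whole game then reduces to proving $\bigl|\int_{a}^{b}fH\bigr|\le \|f\|_{[a,b]}V_{[a,b]}[G]$; the second inequality of the lemma follows immediately by bounding $\bigl|\int_{a}^{b}f\bigr|\le \|f\|_{[a,b]}$ and factoring.

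To estimate the remaining integral I would split at $x_{0}$ and, on each piece, apply the integration-by-parts formula for the Laplace integral on bounded intervals (Theorem~6.1 of \cite{mahanta2021generalised}). On $[a,x_{0}]$, choose the antiderivative $F_{1}(x)=\int_{a}^{x}f$, so that $F_{1}(a)=0$ and $H(x_{0})=0$ simultaneously annihilate both boundary terms; on $[x_{0},b]$, choose instead $F_{2}(x)=-\int_{x}^{b}f$, so that $F_{2}(b)=0$ and again $H(x_{0})=0$ erase the boundary. This leaves, on each half, only a Riemann--Stieltjes term $-\int F_{i}\,dH$, which is bounded by $\sup|F_{i}|\cdot V[H]$. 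Since $|F_{1}(x)|=\bigl|\int_{a}^{x}f\bigr|\le \|f\|_{[a,x_{0}]}$ and $|F_{2}(x)|=\bigl|\int_{x}^{b}f\bigr|\le \|f\|_{[x_{0},b]}$, both dominated by $\|f\|_{[a,b]}$, and since $V_{[a,x_{0}]}[H]+V_{[x_{0},b]}[H]=V_{[a,b]}[G]$, adding the two estimates yields exactly $\|f\|_{[a,b]}V_{[a,b]}[G]$.

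The main obstacle I foresee is this careful choice of antiderivative on each half. The naive version of integration by parts on $[x_{0},b]$ using $F(x)=\int_{x_{0}}^{x}f$ produces a stray boundary term $F(b)H(b)$ which, when bounded crudely by $\|f\|_{[x_{0},b]}V[H]$, contributes an extra factor of two in the final estimate; one must either flip the antiderivative (as above) or invoke the telescoping identity $F(b)H(b)-\int F\,dH = \int\bigl(F(b)-F(x)\bigr)dH(x)$ and use $|F(b)-F(x)|=\bigl|\int_{x}^{b}f\bigr|\le\|f\|_{[x_{0},b]}$ to collapse the boundary contribution into the Stieltjes integral. A secondary but purely classical point is justifying the Riemann--Stieltjes bound $\bigl|\int F\,dH\bigr|\le \sup|F|\,V[H]$, which holds because $F$ is continuous and $H$ is of bounded variation.
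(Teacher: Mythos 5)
Your argument is correct and is essentially the proof the paper relies on: the paper gives no argument of its own but refers to Lemma 24 of \cite{HKFT}, whose proof likewise splits off $G(x_{0})\int_{a}^{b}f$ at a (near-)minimiser $x_{0}$ of $|G|$ and bounds the remainder by $\|f\|_{[a,b]}V_{[a,b]}[G]$ via integration by parts on the two halves, exactly as you do. The only caveat is that your key estimate $\left|\int_{x}^{b}f\right|\leqslant \|f\|_{[x_{0},b]}$ tacitly uses the subinterval form of the Alexiewicz norm, $\|f\|_{[a,b]}=\sup_{[s,t]\subseteq[a,b]}\left|\int_{s}^{t}f\right|$ (not the anchored $\sup_{x}\left|\int_{a}^{x}f\right|$), which is indeed the reading needed for the lemma to hold with constant $1$.
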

	\noindent Proof is similar to that of Lemma 24 of \cite{HKFT}.

	
	\begin{theorem}\label{sequence of integrals}
		Let $I=[a,\infty]$, let $f\in\mathcal{LP}(I)$, let $(g_{n})$ be a sequence in $L^{1}(I)\cap\mathcal{BV}(I)$ and let $G_{n}(x)=\int_{a}^{x}g_{n}$. If $(g_{n})$ is of uniform bounded variation, then
		\[
		\lim\limits_{n\rightarrow \infty}\int fG_{n}=\int f(\,\,\lim\limits_{n\rightarrow \infty} G_{n}).
		\]
	\end{theorem}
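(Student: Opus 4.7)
The plan is to use integration by parts (Corollary~\ref{Int by parts}) to convert each $\int_{a}^{\infty} f G_n$ into a Lebesgue integral $\int_{a}^{\infty} F g_n$ plus a boundary term, thereby reducing the convergence problem to testing a bounded continuous function $F$ against a sequence $(g_n)$ of uniformly bounded variation, a setting in which classical Helly-type theorems apply.

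Concretely, set $F(x) = \int_{a}^{x} f$. Since $f \in \mathcal{LP}(I)$, the primitive $F$ is continuous on $I$ with $F(\infty) := \lim_{x \to \infty} F(x)$ finite, so $F$ is bounded. Writing $G = \lim_{n} G_n$ (taken pointwise, which is the natural reading of the right-hand side of the conclusion), Corollary~\ref{Int by parts} gives for each $n$
\[
\int_{a}^{\infty} f G_n \;=\; F(\infty)\, G_n(\infty) \;-\; \int_{a}^{\infty} F g_n,
\]
where $G_n(\infty) = \int_{a}^{\infty} g_n$ is finite because $g_n \in L^1(I)$. Letting $n \to \infty$, the uniform bounded variation of $(g_n)$ combined with pointwise convergence $g_n \to g$ allows Helly's convergence theorem to deliver $\int F g_n \to \int F g$, while $F(\infty) G_n(\infty) \to F(\infty) G(\infty)$ by the same principle applied to the constant test function. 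A final application of Corollary~\ref{Int by parts} to $fG$ then identifies this common value as $\int_{a}^{\infty} f G$.

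As an alternative, on bounded subintervals $[a,c]$ one can appeal directly to Lemma~\ref{Holder} together with the Cauchy criterion (Theorem~\ref{Cauchy criterion}): choose $c$ so that the Alexiewicz-type quantity $\sup_{c \leqslant p \leqslant q}|\int_{p}^{q} f|$ is arbitrarily small, bound the tail $|\int_{c}^{\infty} f (G_n - G)|$ uniformly in $n$ by $[\inf |G_n - G| + V_{[c,\infty]}[G_n - G]]\cdot \varepsilon$, and handle the bounded piece $\int_{a}^{c} f (G_n - G)$ by the corresponding bounded-interval convergence result of \cite{mahanta2021generalised}. This second route avoids identifying $g$ but requires a uniform BV control on $G_n - G$.

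The principal technical obstacle lies in the gap between the stated hypothesis and what the argument strictly needs: uniform bounded variation of $(g_n)$ alone does not automatically yield pointwise convergence $g_n \to g$ or membership $g \in L^1(I) \cap \mathcal{BV}(I)$ needed to close the integration-by-parts argument. One must either read the hypothesis as also implicitly assuming $G_n \to G$ pointwise (which via Helly's selection principle forces a candidate limit $g$) or else provide an auxiliary $L^1$-domination. Verifying these details rigorously, and confirming that $G(x) = \int_{a}^{x} g$ so that the final integration by parts yields $\int fG$ rather than merely the boundary-minus-bulk expression, is the crucial step to be carried out with care.
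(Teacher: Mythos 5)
The paper offers no argument for this theorem at all: its ``proof'' is the single citation of Theorem 7.2 of \cite{mahanta2021generalised} (the bounded-interval analogue), so there is nothing to match step by step; the question is whether your route stands on its own, and its main branch does not. The integration by parts $\int_a^\infty fG_n=F(\infty)G_n(\infty)-\int_a^\infty Fg_n$ is fine, but you then pass to the limit in the two terms \emph{separately}, invoking Helly--Bray. On $[a,\infty]$ this fails: take $a=0$, $g_n=\chi_{[n,n+1]}$ (each in $L^1\cap\mathcal{BV}$, with variations uniformly bounded by $2$) and $f\geqslant 0$ Lebesgue integrable with $\int_0^\infty f=1$. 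Then $G_n\to 0$ pointwise on $[0,\infty)$ and indeed $\int_0^\infty fG_n\to 0$, consistent with the theorem; but $G_n(\infty)=1$ for every $n$ while $G(\infty)=0$, and $\int_0^\infty Fg_n=\int_n^{n+1}F\to F(\infty)=1\neq 0=\int_0^\infty F\cdot\lim_n g_n$. So neither of your intermediate limits ($\int Fg_n\to\int Fg$ and $G_n(\infty)\to G(\infty)$) holds; only their difference converges correctly, because mass escapes to infinity and Helly--Bray is a compact-interval statement. Any repair has to work with the combined quantity, e.g.\ $F(\infty)G_n(\infty)-\int_a^\infty Fg_n=\int_a^\infty\bigl(F(\infty)-F(x)\bigr)g_n(x)\,dx$, and then one still needs uniform control of the tails of $\int|g_n|$, which the stated hypotheses (uniform bounded variation of $(g_n)$ alone) do not supply; so this branch cannot be closed by a routine domination argument.

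Your alternative sketch --- treat $[a,c]$ by the bounded-interval convergence theorem of \cite{mahanta2021generalised} and the tail $[c,\infty)$ by Lemma \ref{Holder} together with the Cauchy criterion --- is much closer in spirit to what the paper's citation intends, and is the right skeleton for an actual proof. But, as you yourself observe, the tail estimate requires a bound on $\inf_{[c,\infty)}|G_n-G|+V_{[c,\infty)}[G_n-G]$ (equivalently on $\int_c^\infty|g_n-g|$) that is uniform in $n$, and this is again not a consequence of the stated hypotheses. Flagging the gap honestly does not fill it: as written, neither of your two routes constitutes a proof of the statement, and the missing ingredient in both is uniform tail control of the sequence $(g_n)$ (or of $(G_n)$) near infinity, not the integration-by-parts or bounded-interval machinery, which you handle correctly.
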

	\noindent For the proof see Theorem 7.2 of \cite{mahanta2021generalised}.

	\begin{lemma}\label{HK-LP}
		Let $I=[a,\infty]$, where $a\in\mathbb{R}$. Then $\mathcal{LP}(I)\cap\mathcal{BV}(I)=\mathcal{HK}(I)\cap\mathcal{BV}(I)$.
	\end{lemma}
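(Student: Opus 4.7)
The statement asserts the equality of two sets, so the plan is to prove the two inclusions separately. One direction is essentially free and the other requires a short reduction to the bounded-interval case together with the definition of integrability on $[a,\infty]$.

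For the inclusion $\mathcal{HK}(I)\cap\mathcal{BV}(I)\subseteq\mathcal{LP}(I)\cap\mathcal{BV}(I)$, I would simply invoke the remark made just after Definition \ref{Laplace integral}, namely that Henstock-Kurzweil integrability on an unbounded interval is a particular case of Laplace integrability (and the two integrals agree when both exist). So this inclusion is immediate and requires no further work.

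For the nontrivial inclusion $\mathcal{LP}(I)\cap\mathcal{BV}(I)\subseteq\mathcal{HK}(I)\cap\mathcal{BV}(I)$, let $f\in\mathcal{LP}(I)\cap\mathcal{BV}(I)$. The key observation is that on every bounded subinterval $[a,c]\subseteq I$, the function $f$ is of bounded variation, hence bounded and Lebesgue integrable, and in particular $f\in\mathcal{HK}[a,c]$. Moreover, since HK-integrability implies Laplace integrability with the same value of the integral, the HK indefinite integral $c\mapsto\int_{a}^{c}f$ coincides with the Laplace indefinite integral. By hypothesis (b) of Definition \ref{Laplace integral}, the limit $\lim_{c\to\infty}\int_{a}^{c}f$ exists, and therefore it exists in the HK sense too. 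Using the standard definition of the Henstock-Kurzweil integral on an unbounded interval via the existence of the limit of indefinite integrals (Hake's theorem), this yields $f\in\mathcal{HK}(I)$, and since $f\in\mathcal{BV}(I)$ by assumption, we conclude $f\in\mathcal{HK}(I)\cap\mathcal{BV}(I)$.

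The main (only) subtle point is making the step from ``the limit of indefinite integrals exists'' to ``$f$ is HK-integrable on $[a,\infty]$'' rigorous; this is precisely Hake's theorem for the HK integral on unbounded intervals, which is standard and cited implicitly in the paper via Theorem 3.2 of \cite{borkowski2018applications}. Beyond that, the argument is a straightforward matching of definitions.
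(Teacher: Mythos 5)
Your proof is correct and follows essentially the same route as the paper: establish $f\in\mathcal{HK}[a,c]$ on bounded subintervals, use the consistency of the Laplace and Henstock--Kurzweil integrals so that the limit in Definition \ref{Laplace integral}(b) is also the limit of the HK indefinite integrals, and conclude via Hake's theorem. The only cosmetic difference is that you get local Lebesgue (hence HK) integrability directly from bounded variation on compact intervals, whereas the paper cites Corollary 5.2 of \cite{mahanta2021generalised} (bounded Laplace integrable functions on finite intervals are Lebesgue integrable); both steps are valid.
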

	\begin{proof}
		It is enough to prove that $f\in \mathcal{LP}(I)\cap\mathcal{BV}(I)$ implies $f\in\mathcal{HK}(I)$. Note that $f\in \mathcal{LP}(I)\cap\mathcal{BV}(I)$ implies $f\in \mathcal{LP}([a,b])\cap\mathcal{BV}([a,b])$, where $a<b<\infty$. As bounded Laplace integrable functions on finite intervals are Lebesgue integrable (see Corollary 5.2 of \cite{mahanta2021generalised}), $f\in \mathcal{HK}([a,b])$. Now, as
		\begin{equation*}
		\lim\limits_{b\rightarrow\infty}\,\,\,(\mathcal{HK})\int_{a}^{b}f=\lim\limits_{b\rightarrow\infty}\,\int_{a}^{b}f=\int_{a}^{\infty}f,
		\end{equation*}
		Hake's theorem (Theorem 12.8 of \cite[p.~291]{AMTOIntegration}) implies $f\in\mathcal{HK}(I)$.
	\end{proof}
	
	
	\section{A necessary and sufficient condition of Laplace differentiation under the integral sign}
	One may notice that almost the whole paper \cite{HKFT} depends on Lemma 25 of \cite{HKFT}. Furthermore, Theorem 4 of \cite{talvila2001necessary} has a crucial role in proving that lemma. In this section, we also establish results similar to Theorem 4 of \cite{talvila2001necessary}. However, before that, we need to define the concept of a $\mathcal{LP}$-primitive.

	\begin{definition}[{\bf $\mathcal{LP}$-primitive}]
		Let $I=[a,b]\subseteq \overline{\mathbb{R}}$ and let $F\colon I\to \mathbb{R}$ be continuous. We say $F$ is a $\mathcal{LP}$-primitive on $I$ if $LD_{1}F$ exists a.e. on $I$, $LD_{1}F\in \mathcal{LP}(I)$ and $\int_{\alpha}^{\beta}LD_{1}F=F(\beta)-F(\alpha)$ for $\alpha, \beta\in I$.
	\end{definition}

	\begin{theorem}\label{Nec-Suff Prop}
		Let $I=[\alpha,\beta]\times[a,b]\subseteq \overline{\mathbb{R}}\times\overline{\mathbb{R}}$ and let $f\colon I\to \mathbb{R}$. Suppose $f(.\,,y)$ is a $\mathcal{LP}$-primitive on $[\alpha,\beta]$ for a.e. $y\in (a,b)$. Then $F(x)=\int_{a}^{b}f(x,y)\,dy$ is a $\mathcal{LP}$-primitive and $LD_{1}F(x)=\int_{a}^{b}LD_{1x}f(x,y)\,dy$ for almost every $x\in (\alpha, \beta)$ if and only if
		\begin{equation}\label{interchange-integral}
		\int_{s}^{t}\int_{a}^{b}LD_{1x}f(x,y)\,dydx=\int_{a}^{b}\int_{s}^{t}LD_{1x}f(x,y)\,dxdy \qquad \text{for all $s,t \in [\alpha, \beta]$.}
		\end{equation} 
	\end{theorem}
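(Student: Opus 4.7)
The plan is to split the proof into the two implications, using the definition of $\mathcal{LP}$-primitive together with the Fundamental Theorem of Calculus (Theorem~\ref{Fundmental}) as the main engine.

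For the forward direction, I would assume $F$ is a $\mathcal{LP}$-primitive with $LD_{1}F(x)=\int_{a}^{b}LD_{1x}f(x,y)\,dy$ a.e.\ on $(\alpha,\beta)$. Applying the defining property of a $\mathcal{LP}$-primitive to $F$ gives
\[
\int_{s}^{t}\int_{a}^{b}LD_{1x}f(x,y)\,dy\,dx=\int_{s}^{t}LD_{1}F(x)\,dx=F(t)-F(s).
\]
On the other side, since $f(\cdot,y)$ is a $\mathcal{LP}$-primitive on $[\alpha,\beta]$ for a.e.\ $y$, the defining property gives $f(t,y)-f(s,y)=\int_{s}^{t}LD_{1x}f(x,y)\,dx$, and substituting this into $F(t)-F(s)=\int_{a}^{b}[f(t,y)-f(s,y)]\,dy$ yields the right-hand side of \eqref{interchange-integral}. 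So this direction is essentially bookkeeping.

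For the converse, I would set $g(x)=\int_{a}^{b}LD_{1x}f(x,y)\,dy$ and observe that the very existence of the left-hand side of \eqref{interchange-integral} for all $s,t$ says precisely that $g\in\mathcal{LP}[\alpha,\beta]$. Since $f(\cdot,y)$ is a $\mathcal{LP}$-primitive for a.e.\ $y$, the right-hand side of \eqref{interchange-integral} equals $\int_{a}^{b}[f(t,y)-f(s,y)]\,dy=F(t)-F(s)$. Combining with the hypothesised equality,
\[
F(t)-F(s)=\int_{s}^{t}g(x)\,dx\qquad\text{for all }s,t\in[\alpha,\beta].
\]
Fixing $s=\alpha$, this shows $F$ differs by a constant from the indefinite Laplace integral $x\mapsto\int_{\alpha}^{x}g$, which is continuous (continuity of $\mathcal{LP}$-primitives on bounded subintervals is a built-in feature of the Laplace integral, and continuity extends to $[\alpha,\beta]\subseteq\overline{\mathbb{R}}$ via Theorem~\ref{Cauchy criterion}). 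Hence $F$ is continuous. Now Theorem~\ref{Fundmental} applied to $x\mapsto\int_{\alpha}^{x}g=F(x)-F(\alpha)$ gives $LD_{1}F(x)=g(x)$ for almost every $x\in(\alpha,\beta)$, and together with $\int_{s}^{t}LD_{1}F=F(t)-F(s)$ this is exactly the statement that $F$ is a $\mathcal{LP}$-primitive with the desired derivative.

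The main technical point to watch is that $[\alpha,\beta]$ may be unbounded, so I must invoke the unbounded-interval version of the FTC (Theorem~\ref{Fundmental}) rather than the bounded-interval result from \cite{mahanta2021generalised}, and similarly argue continuity of the primitive of $g$ at the endpoints $\pm\infty$ through the Cauchy criterion. The rest is a matter of chasing the equality $\int_{s}^{t}g=F(t)-F(s)$ through the two directions; no hard analytic obstacle is expected beyond this.
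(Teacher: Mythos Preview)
Your proposal is correct and follows essentially the same route as the paper: both directions hinge on rewriting either side of \eqref{interchange-integral} as $F(t)-F(s)$ via the $\mathcal{LP}$-primitive property of $f(\cdot,y)$ and of $F$, and then invoking Theorem~\ref{Fundmental} for the converse. Your version is in fact slightly more explicit about continuity of $F$ and the unbounded-endpoint issue than the paper, which simply fixes a base point $x_{0}\in(\alpha,\beta)$ and appeals directly to Theorem~\ref{Fundmental}.
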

	\begin{proof}
		Let $F$ be a $\mathcal{LP}$-primitive and $LD_{1}F(x)=\int_{a}^{b}LD_{1x}f(x,y)\,dy$ for almost every $x\in (\alpha, \beta)$. Let $s,t\in [\alpha, \beta]$. Then
		\begin{align*}
		\int_{s}^{t}\int_{a}^{b}LD_{1x}f(x,y)\,dydx&= F(t)-F(s)\\
		&=\int_{a}^{b}[f(t,y)-f(s,y)]\,dy=\int_{a}^{b}\int_{s}^{t}LD_{1x}f(x,y)\,dxdy.
		\end{align*}
		Conversely, let us assume \eqref{interchange-integral} holds. Let $x_{0}\in (\alpha, \beta)$ be fixed. Then for $x\in (\alpha, \beta)$, we get
		\begin{align*}
		\int_{x_{0}}^{x}\int_{a}^{b}LD_{1t}f(t,y)\,dydt&=\int_{a}^{b}\int_{x_{0}}^{x}LD_{1t}f(t,y)\,dtdy\\
		&=\int_{a}^{b}[f(x,y)-f(x_{0},y)]\,dy=F(x)-F(x_{0}).
		\end{align*}
		Hence by Theorem \ref{Fundmental}, we get $F$ is a $\mathcal{LP}$-primitive and $LD_{1}F(x)=\int_{a}^{b}LD_{1x}f(x,y)\,dy$ for almost every $x\in (\alpha, \beta)$.
	\end{proof}

	\begin{corollary}\label{Nec-Suff Cor}
		Let $I=[\alpha,\beta]\times[a,b]\subseteq \overline{\mathbb{R}}\times\overline{\mathbb{R}}$ and let $g\colon I\to \mathbb{R}$. Suppose $g(.\,,y)\in \mathcal{LP}[\alpha,\beta]$ a.e. $y\in (a,b)$. Define $G(x)=\int_{a}^{b}\int_{\alpha}^{x}g(t,y)\,dtdy$. Then $G$ is a $\mathcal{LP}$-primitive and $LD_{1}G(x)=\int_{a}^{b}g(x,y)\,dy$ for a.e. $x\in (\alpha,\beta)$ if and only if $\int_{s}^{t}\int_{a}^{b}g(x,y)\,dydx=\int_{a}^{b}\int_{s}^{t}g(x,y)\,dxdy$ for $[s,t]\subseteq[\alpha,\beta]$.
	\end{corollary}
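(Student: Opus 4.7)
The plan is to reduce the corollary directly to Theorem \ref{Nec-Suff Prop} by introducing the auxiliary function $f(x,y)=\int_{\alpha}^{x}g(t,y)\,dt$, so that $G$ of the corollary coincides with $F(x)=\int_{a}^{b}f(x,y)\,dy$ of the theorem.

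First I would check that $f(\cdot,y)$ is a $\mathcal{LP}$-primitive on $[\alpha,\beta]$ for almost every $y\in(a,b)$. Fix such a $y$; since $g(\cdot,y)\in\mathcal{LP}[\alpha,\beta]$, the fundamental theorem of calculus (Theorem \ref{Fundmental}) guarantees that $LD_{1x}f(x,y)=g(x,y)$ for a.e.\ $x\in(\alpha,\beta)$, so $LD_{1x}f(\cdot,y)\in\mathcal{LP}[\alpha,\beta]$, and the relation $\int_{s}^{t}LD_{1x}f(x,y)\,dx=\int_{s}^{t}g(x,y)\,dx=f(t,y)-f(s,y)$ holds for all $s,t\in[\alpha,\beta]$. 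Continuity of $f(\cdot,y)$ follows from standard properties of the Laplace indefinite integral (cf.\ Theorem \ref{Fundmental} and the continuity of $\mathcal{LP}$-primitives assumed in the definition).

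Next I would observe that with this choice,
\[
F(x)=\int_{a}^{b}f(x,y)\,dy=\int_{a}^{b}\int_{\alpha}^{x}g(t,y)\,dt\,dy=G(x),
\]
and that, under the a.e.\ identification $LD_{1x}f(x,y)=g(x,y)$, the compatibility condition \eqref{interchange-integral} of Theorem \ref{Nec-Suff Prop} becomes exactly
\[
\int_{s}^{t}\int_{a}^{b}g(x,y)\,dy\,dx=\int_{a}^{b}\int_{s}^{t}g(x,y)\,dx\,dy\qquad\text{for }[s,t]\subseteq[\alpha,\beta].
\]
Applying Theorem \ref{Nec-Suff Prop} then yields that $G$ is a $\mathcal{LP}$-primitive with $LD_{1}G(x)=\int_{a}^{b}g(x,y)\,dy$ a.e.\ if and only if this Fubini-type identity holds, which is precisely the claim.

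The only delicate point I anticipate is the bookkeeping on the null sets of $y$: the identification $LD_{1x}f(\cdot,y)=g(\cdot,y)$ holds only off an exceptional set in $x$ that depends on $y$, so one must be sure the rewriting of the double integrals is legitimate under this a.e.\ equality. This is routine once we note that the inner integrals $\int_{s}^{t}LD_{1x}f(x,y)\,dx$ and $\int_{s}^{t}g(x,y)\,dx$ agree for every admissible $y$ (by the fundamental theorem of calculus applied on $[s,t]$), so the outer integration against $dy$ transfers cleanly. Hence no separate argument beyond Theorem \ref{Nec-Suff Prop} is needed, and the proof reduces to this substitution.
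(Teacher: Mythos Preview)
Your proposal is correct and is exactly the intended derivation: the paper states this result as an immediate corollary of Theorem~\ref{Nec-Suff Prop} without separate proof, and your substitution $f(x,y)=\int_{\alpha}^{x}g(t,y)\,dt$ together with Theorem~\ref{Fundmental} is precisely how one unpacks it. Your remark about the $y$-dependent null sets is well taken but, as you note, is handled automatically since for each admissible $y$ the inner integrals $\int_{s}^{t}LD_{1x}f(x,y)\,dx$ and $\int_{s}^{t}g(x,y)\,dx$ coincide exactly.
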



	\section{Convolution}
	Let $f\colon\mathbb{R}\to\mathbb{R}$ and $g\colon\mathbb{R}\to\mathbb{R}$. Then the convolution $f*g$ of $f$ and $g$ is defined by 
	\begin{equation}\label{convolution}
	f*g(x)=\int\limits_{\mathbb{R}}f(x-y)g(y)\,dy\qquad\text{for all $x\in\mathbb{R}$,}
	\end{equation} 
	provided the integral exists. We all know the basic properties of convolution while the integral is Lebesgue integral or Henstock-Kurzweil integral (see Section 3 of \cite{HKFT}). Here, we discuss similar properties when the integral is Laplace integral. However, before that, we need to prove the following Lemmas.

	\begin{lemma}\label{transformation}
		Let $f\in\mathcal{LP}[a,\infty]$. Then $\int_{a}^{\infty}f(x)\,dx=\int_{a-y}^{\infty}f(x+y)\,dx$.
	\end{lemma}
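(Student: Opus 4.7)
The statement is a translation invariance result for the Laplace integral on unbounded intervals, so the natural plan is to reduce it to the bounded case and then pass to the limit.

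First I would establish the translation invariance on bounded intervals: for any $c\geqslant a$,
\[
\int_{a}^{c+y}f(x)\,dx=\int_{a-y}^{c}f(x+y)\,dx.
\]
This ought to be a standard property of the Laplace integral on finite intervals from \cite{mahanta2021generalised}. At the level of primitives it is transparent: if $F$ is a $\mathcal{LP}$-primitive of $f$ on $[a,c+y]$, then $G(x):=F(x+y)$ is continuous on $[a-y,c]$ with $LD_{1}G(x)=LD_{1}F(x+y)=f(x+y)$ at every point where $LD_{1}F$ exists (which is a full-measure set translated by $-y$, hence still full measure). So $G$ is a $\mathcal{LP}$-primitive of the translate $f(\cdot+y)$, giving both its integrability on $[a-y,c]$ and the equality of the two integrals.

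Next I would use this to show $g(x):=f(x+y)\in\mathcal{LP}[a-y,\infty]$. For each $c\geqslant a-y$, the number $c+y$ is $\geqslant a$, so $f\in\mathcal{LP}[a,c+y]$ by hypothesis and the bounded translation identity yields $g\in\mathcal{LP}[a-y,c]$ with
\[
\int_{a-y}^{c}g(x)\,dx=\int_{a}^{c+y}f(x)\,dx.
\]
Letting $c\to\infty$ (so $c+y\to\infty$) the right side converges to $\int_{a}^{\infty}f$ by Definition \ref{Laplace integral}. Hence the limit of the left side exists, which is precisely condition (b) of Definition \ref{Laplace integral} for $g$ on $[a-y,\infty]$, and its value is $\int_{a}^{\infty}f$. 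This is exactly the claimed identity.

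The only non-trivial step is the bounded translation identity, and even that reduces to the observation that Laplace differentiation commutes with translation of the argument (which is immediate from the definition of $LD_{1}$, since the defining limits involve only values of $F$ near a base point). Everything else is bookkeeping with the definition of integrability on $[a,\infty]$. I would keep the write-up short: cite the bounded case from \cite{mahanta2021generalised} and then perform the single limit passage above.
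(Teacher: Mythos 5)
Your overall plan coincides with the paper's: prove translation invariance on a bounded interval $[a,b]$, then let $b\to\infty$ using Definition \ref{Laplace integral}. The limit passage at the end is fine and is exactly what the paper does. The problem is the step you call ``transparent'': the bounded-interval identity itself.

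You argue that if $F$ is an $\mathcal{LP}$-primitive of $f$ on $[a,c+y]$, then $G(x)=F(x+y)$ is continuous with $LD_{1}G=f(\cdot+y)$ a.e., and you conclude from this alone that $G$ is an $\mathcal{LP}$-primitive of the translate, ``giving both its integrability on $[a-y,c]$ and the equality of the two integrals.'' That inference is circular with respect to the paper's definitions. Being an $\mathcal{LP}$-primitive is not the same as being continuous with $LD_{1}G=g$ a.e.; the definition in Section 4 also demands that $g=LD_{1}G$ be Laplace integrable and that $\int_{\alpha}^{\beta}LD_{1}G=G(\beta)-G(\alpha)$ --- which is precisely the integrability and the value you are trying to establish. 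Nor can the extra conditions be dropped: since the Laplace derivative extends the ordinary derivative, the Cantor function already shows that a continuous $G$ can satisfy $LD_{1}G=0$ a.e.\ without being the integral of its Laplace derivative, so ``continuous $+$ a.e.\ Laplace differentiable with the right derivative'' does not by itself yield integrability of the derivative or the fundamental-theorem identity. The paper closes this gap by working with the Perron-type definition of the integral from \cite{mahanta2021generalised}: if $U,V$ are major and minor functions of $f$ on $[a,b]$ with $U(b)-V(b)\leqslant\epsilon$, then $U\circ\phi$, $V\circ\phi$ (with $\phi(x)=x+y$) are major and minor functions of $f\circ\phi$ on $[a-y,b-y]$ with the same gap, which transfers both integrability and the value of the integral to the translate; only then does it let $b\to\infty$. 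Unless you can actually cite a translation-invariance statement for the bounded Laplace integral from \cite{mahanta2021generalised} (the paper evidently did not find one, since it proves it), you need to replace your primitive-level argument with an argument at the level of major and minor functions, or some other argument tied to the actual definition of the integral.
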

	\begin{proof}
		Let $\epsilon>0$ be given. Let $U$ and $V$ be respectively major and minor functions of $f$ on $[a,b]$ with $V(a)=U(a)=0$ and $0\leqslant U(b)-V(b)\leqslant \epsilon$, where $a<b<\infty$. Then it is quite obvious that $U\circ\phi$ and $V\circ\phi$ are respectively major and minor functions of $f\circ\phi$ on $[a-y,b-y]$ with $V\circ\phi(a-y)=U\circ\phi(a-y)=0$ and $0\leqslant U\circ\phi(b-y)-V\circ\phi(b-y)\leqslant \epsilon$, where $\phi(x)=x+y$ for all $x\in [a-y,b-y]$. Thus if $F(x)=\int_{a}^{x}f(t)\,dt$ for $x\in[a,b]$, then $F\circ\phi(x)=\int_{a-y}^{x}f\circ\phi(t)\,dt$ for $x\in[a-y,b-y]$ which implies
		\begin{equation*}
		\int_{a}^{b}f(t)\,dt=F(b)=F\circ\phi(b-y)=\int_{a-y}^{b-y}f(t+y)\,dt.
		\end{equation*}
		Now letting $b\rightarrow \infty$, we get the desired result.
	\end{proof}

	\begin{lemma}\label{most useful lemma}  
		Let $f\in \mathcal{LP}(\mathbb{R})$, let $G\colon\mathbb{R}^{2}\to\mathbb{R}$ and let $\mathfrak{I}$ be the collection of all open intervals of $\mathbb{R}$. Moreover, let $\partial_{x}G(x,y)=g(x,y)$. Now define the iterated integrals
		\begin{align*}
		&\mathrm{I}_{1}(A,B)=\int\limits_{y\in B}\int\limits_{x\in A}f(x)G(x,y)\,dxdy,\\
		&\mathrm{I}_{2}(A,B)=\int\limits_{x\in A}\int\limits_{y\in B}f(x)G(x,y)\,dydx,
		\end{align*}
		where $(A,B)\in\mathfrak{I}\times\mathfrak{I}$. If $A\in\mathfrak{I}$ is bounded and if 
		\begin{enumerate}[\upshape (a)] 
			\item $V_{O}[g(\cdot\,,y)]\in L^{1}(B)$, for any interval $O\subseteq A$,
			\item $\left|g(x,y)\right|\leqslant \eta_{J}(y)$ and $\left|G(x,y)\right|\leqslant \kappa_{J}(y)$ for a.e. $(x,y)\in J\times B$, where $J$ is any interval in $A$ and $\eta_{J},\,\kappa_{J}\in L^{1}(B)$,
		\end{enumerate}
		then $\mathrm{I}_{1}(A,B)$ exists, and $\mathrm{I}_{1}(A,B)=\mathrm{I}_{2}(A,B)$. In addition, if $\mathrm{I}_{2}(\mathbb{R},B)$ exists, then $\mathrm{I}_{1}(\mathbb{R},B)=\mathrm{I}_{2}(\mathbb{R},B)$.
	\end{lemma}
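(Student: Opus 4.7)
The strategy is to reduce the identity to a double application of integration by parts, funnelling both iterated integrals through the same Lebesgue double integral. I would handle the bounded case first and then pass to $A=\mathbb{R}$ by a limiting procedure.

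\textbf{Bounded case.} Write $A=(\alpha,\beta)$, put $F(x)=\int_{\alpha}^{x}f$ (continuous and bounded on $[\alpha,\beta]$, with $F(\alpha)=0$) and $\Psi(x)=\int_{B}G(x,y)\,dy$. By (b), for a.e.\ $y$ the function $G(\cdot,y)$ is Lipschitz on $A$ with constant $\eta_{A}(y)$, so $G(\cdot,y)\in L^{1}(A)\cap\mathcal{BV}(A)$; Lemma \ref{Holder} then dominates
\[
\left|\int_{A}f(x)G(x,y)\,dx\right| \le \bigl[\kappa_{A}(y)+|A|\eta_{A}(y)\bigr]\|f\|_{A},
\]
which is in $L^{1}(B)$, establishing existence of $\mathrm{I}_{1}(A,B)$.

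The bounded-interval analogue of Corollary \ref{Int by parts} gives
\[
\int_{A}f(x)G(x,y)\,dx = F(\beta)G(\beta,y) - \int_{A}F(x)g(x,y)\,dx.
\]
Integrating in $y$, the boundary piece becomes $F(\beta)\Psi(\beta)$, while the second term, dominated by $\|F\|_{\infty}\eta_{A}(y)\in L^{1}(A\times B)$, yields to classical Fubini and equals $\int_{A}F(x)\gamma(x)\,dx$ with $\gamma(x)=\int_{B}g(x,y)\,dy$. A dominated-convergence argument on the difference quotients of $\Psi$, using the bound $\eta_{J}$, shows $\Psi'=\gamma$, and hypothesis (a) produces
\[
V_{A}[\gamma] \le \int_{B}V_{A}[g(\cdot,y)]\,dy < \infty,
\]
so $\gamma\in L^{1}(A)\cap\mathcal{BV}(A)$. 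A second integration by parts,
\[
\int_{A}F(x)\Psi'(x)\,dx = F(\beta)\Psi(\beta) - \int_{A}f(x)\Psi(x)\,dx,
\]
cancels the boundary term and leaves $\mathrm{I}_{1}(A,B) = \int_{A}f\Psi\,dx = \mathrm{I}_{2}(A,B)$.

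\textbf{Unbounded case.} Put $A_{n}=(-n,n)$. The bounded case gives $\mathrm{I}_{1}(A_{n},B)=\mathrm{I}_{2}(A_{n},B)$, and the right-hand side converges to $\mathrm{I}_{2}(\mathbb{R},B)$ by the very definition of the improper Laplace integral. The main remaining obstacle is to justify $\mathrm{I}_{1}(A_{n},B)\to\mathrm{I}_{1}(\mathbb{R},B)$. I plan to combine the Cauchy criterion (Theorem \ref{Cauchy criterion}) with Lemma \ref{Holder} applied chunk-wise to show, using (a) together with (b), that the family $y\mapsto\int_{A_{n}}fG(\cdot,y)\,dx$ is uniformly dominated in $n$ by an $L^{1}(B)$ function and converges pointwise in $y$; dominated convergence will then deliver the required equality.
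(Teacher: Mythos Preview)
Your bounded case is correct and follows essentially the same path as the paper: integrate by parts once to replace $f$ by its primitive $F$, use Fubini on the Lebesgue‐integrable product $Fg$, show $\Psi'=\gamma$ by dominated convergence and $\gamma\in\mathcal{BV}(A)$ from hypothesis~(a), and then undo the integration by parts. The paper packages the same computation through the $\mathcal{LP}$–primitive formalism, defining $H_{a}(t)=\int_{B}\int_{a}^{t}fG\,dx\,dy$, checking $LD_{1}H_{a}=f\Psi$, and invoking Theorem~\ref{Nec-Suff Prop}; your two explicit integrations by parts are an equivalent, slightly more hands-on route.

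The unbounded case is where your plan has a genuine gap. You propose to show that the family $y\mapsto\int_{A_{n}}f(x)G(x,y)\,dx$ is dominated, uniformly in $n$, by a single function in $L^{1}(B)$, and then apply dominated convergence. But hypotheses~(a) and~(b) are stated only for bounded intervals $J\subseteq A$, with $\eta_{J},\kappa_{J}$ allowed to depend on $J$; there is no global control on $G$ or $g$ as $|x|\to\infty$, so Lemma~\ref{Holder} applied ``chunk-wise'' produces a bound that grows with $n$ in general. Neither the Cauchy criterion for $f$ alone nor the mere existence of $\mathrm{I}_{2}(\mathbb{R},B)$ supplies the missing uniform $L^{1}(B)$ majorant, and indeed nothing in the hypotheses guarantees even pointwise convergence of $\int_{A_{n}}fG(\cdot,y)\,dx$ for a.e.\ $y$.

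The paper sidesteps this entirely. From the bounded case one has $\mathrm{I}_{2}([a,t],B)=H_{a}(t)$ for every finite $t$, so the assumed existence of $\mathrm{I}_{2}(\mathbb{R},B)=\lim_{t\to\infty}\mathrm{I}_{2}([a,t],B)$ \emph{directly} yields the existence of $\lim_{t\to\infty}H_{a}(t)$. One then extends $H_{a}$ continuously to $H_{a}^{*}$ on $[a,\infty]$, checks that $H_{a}^{*}$ is an $\mathcal{LP}$–primitive with $LD_{1}H_{a}^{*}(t)=\int_{B}f(t)G(t,y)\,dy$, and invokes Corollary~\ref{Nec-Suff Cor} to obtain $\mathrm{I}_{1}((a,\infty),B)=\mathrm{I}_{2}((a,\infty),B)$. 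No dominated-convergence step, and no uniform-in-$n$ bound, is needed; the existence of $\mathrm{I}_{2}(\mathbb{R},B)$ does all the work. To repair your argument you should abandon the search for a dominant and instead route the limit through the $\mathcal{LP}$–primitive machinery as the paper does.
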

	
	\begin{proof}
		Let $J$ be any bounded interval and let $F(x)=\int_{-\infty}^{x}f$. Then by the second condition of this Lemma, we get $\int_{J}\int_{-\infty}^{\infty}\left|F(x)g(x,y)\right|\,dydx<\infty$, proving that $Fg\in L^{1}(J\times\mathbb{R})$ for any bounded interval $J$. So for $-\infty<a< t< b<\infty$ and $(\alpha,\beta)\subseteq\mathbb{R}$, we get
		\begin{equation}\label{Fubini's theorem}
		\int_{\alpha}^{\beta}\int_{a}^{t}F(x)g(x,y)\,dxdy=\int_{a}^{t}\int_{\alpha}^{\beta}F(x)g(x,y)\,dydx.
		\end{equation}
		Let $((a,b),(\alpha,\beta))\in \mathfrak{I}\times\mathfrak{I}$, where $(a,b)$ is bounded. For $t\in (a,b)$, define
		\[
		H_{a}(t)=\int_{\alpha}^{\beta}\int_{a}^{t}f(x)G(x,y)\,dxdy.
		\]
		Then applying integration by parts and \eqref{Fubini's theorem}, we get
		\begin{equation}\label{finiteness of H_{a}}
		H_{a}(t)=F(t)\int_{\alpha}^{\beta}G(t,y)\,dy-\int_{a}^{t}\int_{\alpha}^{\beta}F(x)g(x,y)\,dydx.
		\end{equation}
		Applying the second condition on \eqref{finiteness of H_{a}}, it can be proved that $H_{a}(t)$ is continuous on $(a,b)$. Let $\phi(x)=\int_{\alpha}^{\beta}G(x,y)\,dy$. Then on any bounded interval $J$, $\phi$ is absolutely continuous. Furthermore, as $\left|\partial_{x}G(x,y)\right|=\left|g(x,y)\right|\leqslant \eta_{J}(y)\in L^{1}((\alpha, \beta))$, we get $\phi^{'}(x)=\int_{\alpha}^{\beta}g(x,y)\,dy$. Thus by Corollary 6.1 of \cite{mahanta2021generalised}, we have $LD_{1}H_{a}(t)=f(t)\int_{\alpha}^{\beta}G(t,y)\,dy=f(t)\phi(t)$ for a.e. $t\in(a,b)$. Let $P:=\{a=x_{0},x_{1},...,x_{n}=b\}$ be any partition of $[a,b]$. Then by the first condition, we get
		\[ 
		\sum_{i=0}^{n-1}\left| \phi^{'}(x_{i+1})-\phi^{'}(x_{i})\right|\leqslant \int_{\alpha}^{\beta}\sum_{i=0}^{n-1}\left| g(x_{i+1},y)-g(x_{i},y)\right|\,dy\leqslant \int_{\alpha}^{\beta}V_{[a,b]}[g(\cdot\,,y)]\,dy<\infty,
		\]
		proving that $\phi^{'}\in \mathcal{BV}[a,b]$. Thus $f\phi\in \mathcal{LP}[a,b]$. Moreover, integrating $f\phi$, we can prove that $H_{a}$ is one of its primitive. Now Theorem \ref{Nec-Suff Prop} implies that $\mathrm{I}_{1}(J, (\alpha,\beta))=\mathrm{I}_{2}(J, (\alpha,\beta))$, where $(J, (\alpha,\beta))\in\mathfrak{I}\times\mathfrak{I}$ and $J$ is bounded.\vspace{2mm}
		
		\par As it is assumed that $I_{2}(\mathbb{R},B)$ exists, for $a\in \mathbb{R}$, we have
		\begin{align}\label{H(t)}
		\begin{split}
		\int\limits_{[a,\infty]}\int\limits_{B}f(x)G(x,y)\,dydx&=\lim\limits_{t\rightarrow \infty}\int\limits_{[a,t]}\int\limits_{B}f(x)G(x,y)\,dydx\\
		&=\lim\limits_{t\rightarrow \infty}\int\limits_{B}\int\limits_{[a,t]}f(x)G(x,y)\,dxdy.
		\end{split}
		\end{align}
		Thus $\lim\limits_{t\rightarrow \infty}H_{a}(t)$ exists. Define
		\begin{align}\label{H*(t)}
		H^{*}_{a}(t)=
		\begin{cases}
		H_{a}(t)&\text{if $a\leqslant t<\infty$,}\\
		\lim\limits_{x\rightarrow \infty}H_{a}(x)&\text{if $t=\infty$}.
		\end{cases}
		\end{align}
		Then $H^{*}_{a}$ is continuous on $[a,\infty]$ and
		\begin{align*}
		LD_{1}H^{*}_{a}(t)=LD_{1}H_{a}(t)=\int_{B}f(t)G(t,y)\,dy\qquad\text{for a.e. $t\in\mathbb{R}$.}
		\end{align*}
		Moreover, existence of $I_{2}(\mathbb{R},B)$ implies that $\int_{a}^{\infty}LD_{1}H^{*}_{a}(t)\,dt$ exists. Now as $H_{a}$ is a $\mathcal{LP}$-primitive on every bounded intervals in $\mathbb{R}$, applying \eqref{H(t)} and \eqref{H*(t)} we have 
		\[
		\int_{\alpha}^{\beta}LD_{1}H^{*}_{a}(t)\,dt= H^{*}_{a}(\beta) - H^{*}_{a}(\alpha)\qquad\text{for all $\alpha, \beta\in [a,\infty]$}
		\]
		which implies that $H^{*}_{a}$ is a $\mathcal{LP}$-primitive. Therefore, Corollary \ref{Nec-Suff Cor} implies that
		\[
		I_{1}((a,\infty),B)=I_{2}((a,\infty),B).
		\]
		Similarly, we can prove that $I_{1}((-\infty,a),B)=I_{2}((-\infty,a),B)$, and this completes the proof.
	\end{proof}

	\begin{theorem}
		Let $f\colon\mathbb{R}\to\mathbb{R}$ and $g\colon\mathbb{R}\to\mathbb{R}$. Then
		\begin{enumerate}[\upshape (a)] 
			\item  $f*g=g*f$, provided \eqref{convolution} exists.
			\item if $f\in\mathcal{LP}(\mathbb{R})$, $h\in L^{1}(\mathbb{R})$ and $g''\in L^{1}(\mathbb{R})$, then $(f*g)*h=f*(g*h)$.
			\item for $z\in\mathbb{R}$, $\tau_{z}(f*g)=(\tau_{z}f)*g=f*(\tau_{z}g)$, where $\tau_{z}f(x)=f(x-z)$.
			\item if $A=\{ x+y\mid x\in supp(f),\,\, y\in supp(g) \}$, then $supp(f*g)\subseteq\overline{A}$.    		
		\end{enumerate}
	\end{theorem}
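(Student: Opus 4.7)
The plan is to handle (a), (c), and (d) as essentially routine changes of variable and topological bookkeeping, and to concentrate effort on (b) via Lemma~\ref{most useful lemma}.

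For (c), I would unfold the definition to obtain $f*(\tau_{z}g)(x)=\int_{\mathbb{R}} f(x-y)g(y-z)\,dy$ and apply Lemma~\ref{transformation} with the substitution $u=y-z$ to get $\int_{\mathbb{R}} f(x-z-u)g(u)\,du=(f*g)(x-z)=\tau_{z}(f*g)(x)$; the identity $(\tau_{z}f)*g=\tau_{z}(f*g)$ is immediate from the definition of $\tau_{z}$. For (a), the substitution $u=x-y$ in the definition of $f*g(x)$ is the composition of a translation and a reflection; the translation piece is Lemma~\ref{transformation}, and the reflection invariance $\int_{\mathbb{R}} f(-y)\,dy=\int_{\mathbb{R}} f(y)\,dy$ for $f\in\mathcal{LP}(\mathbb{R})$ follows from the same major/minor-function argument used to prove Lemma~\ref{transformation} (reflecting a major function of $f$ yields, up to sign, a minor function of $f(-\cdot)$, and conversely). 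Part (d) is purely topological: if $x\notin\overline{A}$, then for every $y\in\mathbb{R}$ the product $f(x-y)g(y)$ must vanish, since $f(x-y)\neq 0$ would force $x-y\in supp(f)$ and $g(y)\neq 0$ would force $y\in supp(g)$, giving $x\in A\subseteq\overline{A}$, a contradiction; hence $(f*g)(x)=0$ on $\mathbb{R}\setminus\overline{A}$ and passing to the closure yields $supp(f*g)\subseteq\overline{A}$.

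The substantive item is (b). Writing out the definitions,
\begin{align*}
((f*g)*h)(x) &= \int_{\mathbb{R}}\int_{\mathbb{R}} f(x-z-y)\,g(y)\,h(z)\,dy\,dz,\\
(f*(g*h))(x) &= \int_{\mathbb{R}}\int_{\mathbb{R}} f(x-y)\,g(y-z)\,h(z)\,dz\,dy.
\end{align*}
After the substitution $u=x-y$ in the inner $y$-integrals (justified by Lemma~\ref{transformation}), both expressions reduce, up to the order of integration, to the common iterated integral
\[
\int\!\!\int f(u)\,g(x-u-z)\,h(z)\,du\,dz.
\]
The proof therefore reduces to invoking Lemma~\ref{most useful lemma} with $G(u,z):=g(x-u-z)\,h(z)$, so that $\partial_{u}G(u,z)=-g'(x-u-z)\,h(z)$. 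I would verify the hypotheses as follows: first, $V_{O}[\partial_{u}G(\cdot\,,z)]\leqslant |h(z)|\,\|g''\|_{1}$, which lies in $L^{1}_{z}(\mathbb{R})$ since $h\in L^{1}(\mathbb{R})$; second, $g''\in L^{1}(\mathbb{R})$ forces $g'\in\mathcal{BV}(\mathbb{R})\cap L^{\infty}(\mathbb{R})$, giving $|\partial_{u}G|\leqslant\|g'\|_{\infty}|h|$, while the bounded-variation structure of $g$ (built into the existence of $f*g$ and $g*h$ in the hypothesis) delivers the majorant $|G|\leqslant\|g\|_{\infty}|h|$. Once these hypotheses are in place and the existence of $\mathrm{I}_{2}(\mathbb{R},\mathbb{R})=(f*(g*h))(x)$ is noted, Lemma~\ref{most useful lemma} produces the interchange of order of integration and hence the associativity identity.

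The main obstacle is the careful verification of the two hypotheses of Lemma~\ref{most useful lemma} on unbounded intervals in part (b), together with the a priori existence of the outer iterated integral $\mathrm{I}_{2}(\mathbb{R},\mathbb{R})$; the remaining parts are routine once the correct substitutions have been identified.
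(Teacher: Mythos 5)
Your proposal follows essentially the same route as the paper: parts (a), (c), (d) are handled via Lemma~\ref{transformation} and routine substitution/topological arguments, and part (b) is proved by applying Lemma~\ref{most useful lemma} to $G(y,z)=g(x-y-z)h(z)$, using $g''\in L^{1}(\mathbb{R})$ and $h\in L^{1}(\mathbb{R})$ to verify its hypotheses, exactly as the paper does. Your explicit note that commutativity in (a) also needs reflection invariance of the Laplace integral (not just the translation lemma, which is all the paper cites) is a small but correct addition; otherwise the argument coincides with the paper's.
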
 
	\begin{proof}
		Proof of (a) is a straight forward consequence of Lemma \ref{transformation} and that of (c) and (d) are same as in the case of Lebesgue Fourier transformation. So we give the proof of (b) only.\vspace{2mm}
		\par Applying (a), we have
		\begin{equation*}
		(f*g)*h(x)=\int\int f(y)g(x-y-z)h(z)\,dydz.
		\end{equation*}
		Let $G^{x}(y,z)=g(x-y-z)h(z)$. As $g'$ is bounded and $g''$, $h$ are Lebesgue integrable on $\mathbb{R}$, $G^{x}(y,z)$ satisfies all conditions of Lemma \ref{most useful lemma}. Thus 
		\begin{align*}
		(f*g)*h(x)&=\int\int f(y)g(x-y-z)h(z)\,dzdy\\
		&=\int\int f(y)g*h(x-y)\,dy= f*(g*h)(x). \qedhere
		\end{align*}
	\end{proof}

	\begin{theorem}\label{Four-Conv-Lem}
		Let $f\in\mathcal{LP}(\mathbb{R})$, let $g\in L^{1}(\mathbb{R})\cap\mathcal{BV}(\mathbb{R})$, let $F(x)=\int_{-\infty}^{x}f$ and let $G(x)=\int_{-\infty}^{x}g$. If $F$ and $G$ are Lebesgue integrable and $G(\infty)=\lim_{x\rightarrow\infty}G(x)=0$, then 
		\[
		\int_{-\infty}^{t}\int_{-\infty}^{\infty}f(x-y)G(y)\,dydx=\int_{-\infty}^{\infty}\int_{-\infty}^{t}f(x-y)G(y)\,dxdy.
		\]
		Moreover, we have
		\begin{align}\label{conv-norm}
		\begin{split}
		&\|f*G\|_{1}\leqslant \|F\|_{1}\|g\|_{1},\\
		&\|f*G\|_{\mathcal{A}}\leqslant \|f\|_{\mathcal{A}}\|G\|_{1},
		\end{split}
		\end{align}
		where $\|f\|_{\mathcal{A}}=\sup_{x\in \mathbb{R}}\left|\int_{-\infty}^{x}f\right|$, the Alexiewicz's norm (see \cite{LFDIF}) of $f$.
	\end{theorem}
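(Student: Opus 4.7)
The strategy is to recast the LHS in the form covered by Lemma \ref{most useful lemma}, and then extract both the interchange identity and the two norm bounds from what that lemma delivers. The first move is the substitution $u=x-y$ in the inner integral on the left, which rewrites the LHS as $\int_{-\infty}^{t}\int_{-\infty}^{\infty} f(u)\,G(x-u)\,du\,dx$. Setting $\mathcal{G}(u,x):=G(x-u)$, so that $\partial_{u}\mathcal{G}(u,x)=-g(x-u)$, and taking $A=\mathbb{R}$, $B=(-\infty,t)$, this is precisely $\mathrm{I}_{1}(\mathbb{R},B)$ in the notation of Lemma \ref{most useful lemma}.

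Next I would verify the hypotheses of Lemma \ref{most useful lemma}. For condition (a), for any bounded $O=[a,b]\subseteq\mathbb{R}$ the translation invariance of total variation gives $V_{O}[g(x-\cdot)]=V_{[x-b,x-a]}[g]$, and Fubini applied to the variation measure of $g$ produces $\int_{-\infty}^{t}V_{[x-b,x-a]}[g]\,dx\leqslant (b-a)\|g\|_{1}$. For condition (b) on a bounded $J=[c,d]$, I would use the elementary bound $\sup_{u\in J}|g(x-u)|\leqslant |g(x-c)|+V_{[x-d,x-c]}[g]$; combined with (a) and translation invariance of $\|\cdot\|_{1}$, this exhibits an $\eta_{J}\in L^{1}(B)$, and the same argument with $G$ in place of $g$ (using $G\in L^{1}\cap\mathcal{BV}(\mathbb{R})$) provides a $\kappa_{J}\in L^{1}(B)$. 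Existence of $\mathrm{I}_{2}(\mathbb{R},B)=\int f(u)\,\Gamma(t-u)\,du$ with $\Gamma(s)=\int_{-\infty}^{s}G$ follows by integration by parts on $[a,b]$ and letting $a\to-\infty$, $b\to\infty$: since $F(-\infty)=\Gamma(-\infty)=0$ and both $F$ and $\Gamma$ are bounded, the boundary term vanishes, while $\int F(u)\,G(t-u)\,du$ converges absolutely because $F\in L^{1}$ and $\|G\|_{\infty}\leqslant \|g\|_{1}$.

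Lemma \ref{most useful lemma} then yields $\mathrm{I}_{1}(\mathbb{R},B)=\mathrm{I}_{2}(\mathbb{R},B)=\int F(u)\,G(t-u)\,du=(F*G)(t)$. On the other hand the RHS of the theorem evaluates directly (via the substitution $u=x-y$ in the inner integral) to $\int G(y)\,F(t-y)\,dy$, which equals $(F*G)(t)$ after $v=t-y$; so the interchange identity is proved. Differentiating the identity $\int_{-\infty}^{t}(f*G)(x)\,dx=(F*G)(t)$ in $t$ (valid since $G$ is absolutely continuous with $G'=g$ and $F\in L^{1}$) gives $(f*G)(t)=(F*g)(t)$ a.e., whence $\|f*G\|_{1}=\|F*g\|_{1}\leqslant \|F\|_{1}\|g\|_{1}$ by Young's inequality for $L^{1}*L^{1}$. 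For the Alexiewicz bound, $\|f*G\|_{\mathcal{A}}=\sup_{t}\bigl|(F*G)(t)\bigr|\leqslant \|F\|_{\infty}\|G\|_{1}=\|f\|_{\mathcal{A}}\|G\|_{1}$, using $\|F\|_{\infty}=\|f\|_{\mathcal{A}}$.

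The main obstacle is verifying condition (b) of Lemma \ref{most useful lemma}: the naive bound $|g(x-u)|\leqslant \|g\|_{\infty}$ is not in $L^{1}$ of the infinite interval $(-\infty,t)$, so one must decompose $\sup_{u\in J}|g(x-u)|$ as \emph{value at a point plus swept variation on the window} to gain the $L^{1}$ control in $x$, and handle $G$ analogously; the same swept-variation idea is what makes condition (a) workable.
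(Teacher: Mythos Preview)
Your argument is essentially correct, but it takes a genuinely heavier route than the paper's own proof. The paper does not invoke Lemma \ref{most useful lemma} at all. Instead, it proceeds in three elementary steps: first, one integration by parts in the inner integral gives $(f*G)(x)=\int F(x-y)g(y)\,dy=(F*g)(x)$ directly (the boundary terms vanish because $G(\pm\infty)=0$ and $F$ is bounded); second, since $F,g\in L^{1}(\mathbb{R})$, classical Fubini justifies $\int_{-\infty}^{t}(F*g)(x)\,dx=\int_{-\infty}^{\infty}\bigl(\int_{-\infty}^{t}F(x-y)\,dx\bigr)g(y)\,dy$; third, a second integration by parts in $y$ turns this into $\int_{-\infty}^{\infty}F(t-y)G(y)\,dy$, which is exactly the right-hand side after computing the inner integral. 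The two norm bounds then drop out of the identities $(f*G)=(F*g)$ and $\int_{-\infty}^{t}(f*G)=(F*G)(t)$ exactly as you derive them.

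What each approach buys: the paper's argument avoids all the verification of hypotheses (a) and (b) of Lemma \ref{most useful lemma}, in particular your swept-variation device for manufacturing $L^{1}$ majorants on an infinite $B$; the price is two applications of integration by parts in place of one appeal to a single strong interchange lemma. Your route is more uniform with the rest of the paper (Lemma \ref{most useful lemma} is the workhorse for the Fourier-transform results), but here it is overkill since a direct $L^{1}$-Fubini is available after one integration by parts. One small slip: in your check of condition (a), the bound should read $\int_{\mathbb{R}}V_{[x-b,x-a]}[g]\,dx=(b-a)\,V_{\mathbb{R}}[g]$, not $(b-a)\|g\|_{1}$; the conclusion (finiteness, hence $L^{1}(B)$) is unaffected. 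Also, your derivation of $(f*G)=(F*g)$ via differentiating the identity $\int_{-\infty}^{t}(f*G)=(F*G)(t)$ is circuitous; the one-line integration by parts the paper uses gives this immediately and cleanly.
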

	\begin{proof}
		Let $h(x)=\int_{-\infty}^{\infty}f(x-y)g(y)\,dy$ for $x\in\mathbb{R}$. Then using integration by parts, we have 
		\begin{equation}\label{conv-eq1}
		h(x)=\int_{-\infty}^{\infty}F(x-y)g(y)\,dy=F*g(x).
		\end{equation}
		Now as both $F$ and $g$ are Lebesgue integrable, using Fubini's theorem and integration by parts, we have 
		\begin{align}\label{conv-eq2}
		\begin{split}
		\int_{-\infty}^{t}h(x)\,dx&=\int_{-\infty}^{\infty}\left(\int_{-\infty}^{t}F(x-y)\,dx\right)g(y)\,dy\\
		&=\int_{-\infty}^{\infty}F(t-y)G(y)\,dy=\int_{-\infty}^{\infty}\int_{-\infty}^{t}f(x-y)G(y)\,dxdy
		\end{split}
		\end{align}
		which implies $\int_{-\infty}^{t}\int_{-\infty}^{\infty}f(x-y)G(y)\,dydx=\int_{-\infty}^{\infty}\int_{-\infty}^{t}f(x-y)G(y)\,dxdy$. From \eqref{conv-eq1} and \eqref{conv-eq2}, we shall get \eqref{conv-norm}.
	\end{proof}

	\section{Fourier transform}    
	Let $f\colon\mathbb{R}\to \mathbb{R}$. Then the Fourier transform of $f$ is defined by 
	\begin{equation}\label{Fourier Transform}
	\widehat{f}(y)=\int_{-\infty}^{\infty}f(x)e^{-2\pi iyx}\,dx,
	\end{equation}
	provided the integral exists for $y\in \mathbb{R}$.\vspace{2mm} 
	\par Now, if we denote $G_{y}(x)=e^{-2\pi iyx}$, then it is easy to verify that $G'_{y}$ is of bounded variation on finite intervals. Hence if the integral in \eqref{Fourier Transform} exists, it implies that $f\in \mathcal{LP}_{loc}(\mathbb{R})$, where by $\mathcal{LP}_{loc}(\mathbb{R})$ we mean the set of all locally Laplace integrable functions. Now we establish some basic properties of the Fourier transform in this general setting.
	
	\begin{theorem}[\bf Existence theorems]\noindent
		\begin{enumerate}[\upshape (a)] 
			\item If $f\in\mathcal{LP}_{loc}(\mathbb{R})$ and $f$ is Lebesgue integrable in a neighbourhood of infinity then $\widehat{f}$ exists. 
			\item If $f\in \mathcal{LP}(\mathbb{R})\cap\mathcal{BV}(\pm\infty)$, then $\widehat{f}$ exists on $\mathbb{R}$.
		\end{enumerate}
	\end{theorem}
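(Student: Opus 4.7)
For part (a), the idea is to pick $M>0$ with $f\in L^{1}(\mathbb{R}\setminus(-M,M))$ and decompose
\begin{equation*}
\widehat{f}(y)=\int_{-\infty}^{-M}f(x)e^{-2\pi iyx}\,dx+\int_{-M}^{M}f(x)e^{-2\pi iyx}\,dx+\int_{M}^{\infty}f(x)e^{-2\pi iyx}\,dx.
\end{equation*}
The two unbounded pieces exist as Lebesgue (hence Laplace) integrals since $|f(x)e^{-2\pi iyx}|=|f(x)|$. For the middle piece, $f\in\mathcal{LP}[-M,M]$ by local Laplace integrability, and the smoothness of $x\mapsto e^{-2\pi iyx}$ on $[-M,M]$ makes it a primitive of a bounded variation function there, so the bounded-interval integration-by-parts result from \cite{mahanta2021generalised} (the precursor of Corollary \ref{Int by parts}) gives $f(x)e^{-2\pi iyx}\in\mathcal{LP}[-M,M]$.

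For part (b), $\widehat{f}(0)=\int_{-\infty}^{\infty}f$ exists by hypothesis, so assume $y\neq 0$. Choose $a>0$ so that $f\in\mathcal{BV}(\mathbb{R}\setminus(-a,a))$ and split the Fourier integral at $\pm a$. The piece on $[-a,a]$ is handled as in (a). By symmetry it suffices to treat $\int_{a}^{\infty}f(x)e^{-2\pi iyx}\,dx$. The first crucial step is to show that $\lim_{x\to\infty}f(x)=0$: the limit exists because $f\in\mathcal{BV}[a,\infty)$ is the difference of two bounded monotone functions, and if it equalled a nonzero value $\ell$ then $\bigl|\int_{p}^{q}f\bigr|\geq\tfrac{|\ell|}{2}(q-p)$ for all sufficiently large $p<q$, violating the Cauchy criterion (Theorem \ref{Cauchy criterion}) applied to $f\in\mathcal{LP}[a,\infty)$.

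With $f(\infty)=0$ in hand, on every $[a,R]$ the product $f(x)e^{-2\pi iyx}$ is a bounded BV function times a smooth function and hence is Riemann integrable, with its Lebesgue value coinciding with its Laplace value. Classical integration by parts yields
\begin{equation*}
\int_{a}^{R}f(x)e^{-2\pi iyx}\,dx=\frac{f(a)e^{-2\pi iya}-f(R)e^{-2\pi iyR}}{2\pi iy}+\frac{1}{2\pi iy}\int_{a}^{R}e^{-2\pi iyx}\,df(x).
\end{equation*}
Letting $R\to\infty$, the boundary term tends to $f(a)e^{-2\pi iya}/(2\pi iy)$ because $f(R)\to 0$ while $|e^{-2\pi iyR}|=1$, and the Lebesgue--Stieltjes integral on the right converges absolutely since $|e^{-2\pi iyx}|=1$ and $V_{[a,\infty)}[f]<\infty$. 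Hence the limit of $\int_{a}^{R}f(x)e^{-2\pi iyx}\,dx$ exists, so $f(x)e^{-2\pi iyx}\in\mathcal{LP}[a,\infty)$; the argument on $(-\infty,-a]$ is identical.

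The main obstacle is the tail analysis in (b): since $e^{-2\pi iyx}$ is neither Lebesgue integrable nor of bounded variation at infinity, none of the multiplier results of Section 3 can be invoked directly on $[a,\infty)$. The decisive step is extracting $f(\pm\infty)=0$ from the combination of Laplace integrability and BV via the Cauchy criterion; once this is known, the oscillation of the exponential against the finite measure $df$ is precisely what makes the remainder integral converge absolutely.
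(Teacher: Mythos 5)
Your proposal is correct, and its two halves relate differently to the paper. Part (a) is essentially the paper's own argument: split at $\pm M$, use Lebesgue integrability of the tails, and use the bounded-interval multiplier result (the derivative of $e^{-2\pi iyx}$ is of bounded variation on $[-M,M]$) for the central piece. Part (b), however, takes a genuinely different route. The paper disposes of (b) in one line: writing the tails as $f\chi_{(-\infty,-a]}$ and $f\chi_{[a,\infty)}$, these lie in $\mathcal{LP}\cap\mathcal{BV}$ of the corresponding unbounded intervals, hence in $\mathcal{HK}\cap\mathcal{BV}$ by Lemma \ref{HK-LP}, and the existence of their Fourier integrals is then quoted from Theorem 3.1 of \cite{mendoza2009some}, the central piece being as in (a). You instead re-prove that Henstock--Kurzweil existence result from scratch inside the Laplace framework: the Cauchy criterion (Theorem \ref{Cauchy criterion}) combined with bounded variation forces $f(\pm\infty)=0$, and then the Dirichlet-test integration by parts against the Stieltjes measure $df$, with $\left|\int_{R_{1}}^{R_{2}}e^{-2\pi iyx}\,df(x)\right|\leqslant V_{[R_{1},R_{2}]}[f]\to 0$, yields convergence of the tail integrals for $y\neq 0$, the case $y=0$ being just $f\in\mathcal{LP}(\mathbb{R})$. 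The paper's reduction is shorter and leans on the bridge lemma plus the existing Henstock--Kurzweil literature; yours is self-contained, makes the mechanism of tail convergence explicit, and only needs the consistency of the Laplace integral with the Lebesgue integral on bounded intervals (which you correctly invoke when identifying $\int_{a}^{R}f(x)e^{-2\pi iyx}\,dx$ with its Riemann--Stieltjes evaluation). Both arguments are sound.
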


	\begin{proof} 
		\begin{enumerate}[\upshape (a)] 
			\item Let $a\in (0,\infty)$ be such that $f\in L^{1}(\mathbb{R}\setminus(-a,a))$. Then $\int_{|x|\geqslant a}f(x)e^{-2\pi iyx}\,dx$ exists. Now, as $g_{y}(x)=(-2\pi iy)e^{-2\pi iyx}$ is of bounded variation on $[-a,a]$,
			\[
			\int_{-a}^{a}f(x)e^{-2\pi iyx}\,dx
			\]
			exists. Thus $\widehat{f}$ exists on $\mathbb{R}$.
			\item It is a straightforward consequence of the previous part, Lemma \ref{HK-LP}, and Theorem 3.1 of \cite{mendoza2009some}.\qedhere
		\end{enumerate}
	\end{proof}
	
	\noindent The above theorem implies that $\widehat{\Phi}$ (see Example \ref{TheOnleExample}) exists in our setting; however, $\widehat{\Phi}$ does not exist in the sense of Henstock-Kurzweil Fourier transform since $\Phi$ is not locally Henstock integrable.

	\begin{theorem}
		Suppose $f,g\in \mathcal{LP}(\mathbb{R})$.
		\begin{enumerate}[\upshape (a)] 
			\item If $\widehat{f}$ exists, then $\widehat{\tau_{\zeta}f}(y)=e^{-2\pi i\zeta y}\widehat{f}(y)$ and $\tau_{\eta}\widehat{f}=\widehat{h}$, where $h(x)=e^{2\pi i\eta x}f(x)$.
			\item 	Let $\widehat{f}$ exists at $y\in\mathbb{R}$, let $g\in L^{1}(\mathbb{R})\cap\mathcal{BV}(\mathbb{R})$, let $G(x)=\int_{-\infty}^{x}|g|$ and let $F_{y}(x)=\int_{-\infty}^{x}e^{-2\pi iyt}f(t)\,dt$. If $F_{y}$ and $G$ are Lebesgue integrable and $G(\infty)=\lim_{x\rightarrow\infty}G(x)=0$, then 
			\[
			\widehat{f*G}(y)=\widehat{f}(y)\widehat{G}(y).
			\]
			\item Let $x^{n}f\in \mathcal{LP}(\mathbb{R})$ and let $\widehat{x^{n}f}$ exists for $n=0,1$. Then $\dfrac{d\widehat{f}}{dy}=\widehat{h}$ a.e. on $\mathbb{R}$, where $h(x)=(-2\pi ix)f(x)$.
			\item Let $f,f'\in \mathcal{LP}(\mathbb{R})$. If $\widehat{f}$ exists, then $\widehat{f'}$ exists and $\widehat{f'}(y)=(2\pi iy)\widehat{f}(y)$.
			\item Let $f\in \mathcal{LP}(\mathbb{R})$. If
			\begin{enumerate}[\upshape (i)]
				\item $f$ has compact support, then $\widehat{f}$ is continuous on $\mathbb{R}$.
				\item $f\in \mathcal{BV}(\pm\infty)$, then $\widehat{f}$ is continuous on $\mathbb{R}\setminus\{0\}$.
			\end{enumerate}
			\item {\upshape({\bf Riemann-Lebesgue Lemma})} If $f\in \mathcal{LP}(\mathbb{R})\cap\mathcal{BV}(\mathbb{R})$, then $\lim_{|t|\rightarrow\infty} \widehat{f}(t)=0$.
		\end{enumerate}
	\end{theorem}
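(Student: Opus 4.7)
The six statements split naturally into translation/modulation (a), convolution (b), the differentiation identities (c) and (d), and the regularity statements (e) and (f). For (a), I would unwind $\widehat{\tau_\zeta f}(y)=\int f(x-\zeta)e^{-2\pi iyx}\,dx$ by the change of variable $\phi(u)=u+\zeta$ from Lemma \ref{transformation}, pulling out the factor $e^{-2\pi iy\zeta}$; the modulation identity is an immediate rewriting of $e^{-2\pi i(y-\eta)x}=e^{2\pi i\eta x}e^{-2\pi iyx}$. For (f), Lemma \ref{HK-LP} places $f$ in $\mathcal{HK}(\mathbb{R})\cap\mathcal{BV}(\mathbb{R})$, so Theorem 3.1 of \cite{mendoza2009some} yields $\widehat f(t)\to 0$ as $|t|\to\infty$ with no further work.

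For (b), I would apply Theorem \ref{Four-Conv-Lem} with the locally Laplace integrable function $\tilde f(x)=e^{-2\pi iyx}f(x)$ in place of $f$ and the same weight $G$: the hypotheses on $F_y$, $G$, and $G(\infty)=0$ are precisely those needed. After the licensed interchange of iterated integrals, Lemma \ref{transformation} applied to the inner integral absorbs the translation by $t$ and factors the double integral as $\widehat{G}(y)\widehat{f}(y)$.

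Parts (c) and (d) carry the main computational weight. For (c) I would invoke Theorem \ref{Nec-Suff Prop} with the two-variable integrand $f(x)e^{-2\pi iyx}$: the local bounded variation in $y$ of $e^{-2\pi iyx}$ together with the assumed existence of $\widehat{xf}$ supplies the Fubini-type condition \eqref{interchange-integral} that certifies $\widehat f$ as a $\mathcal{LP}$-primitive with Laplace derivative $\widehat h$. For (d) I would integrate by parts via Corollary \ref{Int by parts} on $[-N,N]$ with the bounded-variation weight $e^{-2\pi iyx}$ and let $N\to\infty$. Since $f'\in\mathcal{LP}(\mathbb{R})$ with continuous primitive $f$, the limits $f(\pm\infty)$ exist, and the existence of $\widehat f(y)$ forces them to vanish; what remains is $(2\pi iy)\widehat f(y)$.

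For (e)(i), on a compact support the integrand is a Laplace integrable function times $G_y(x)=e^{-2\pi iyx}$, and as $y\to y_0$ the family $G_y$ converges with uniformly bounded variation, so Theorem \ref{sequence of integrals} gives continuity. For (e)(ii) I would split $\widehat f(y)=\int_{|x|\le a}+\int_{|x|>a}$: the first piece is continuous by (i), while the tail is the Henstock-Kurzweil Fourier transform (via Lemma \ref{HK-LP}) of a bounded variation function, whose continuity on $\mathbb{R}\setminus\{0\}$ is again in \cite{mendoza2009some}. The principal obstacle is part (c): verifying the interchange hypothesis of Theorem \ref{Nec-Suff Prop} reduces to a careful application of Lemma \ref{most useful lemma} with $G(x,y)=e^{-2\pi iyx}$ on bounded $y$-intervals, where the required $L^1$ and BV dominations hold; once that interchange is secured, the remaining steps are mechanical.
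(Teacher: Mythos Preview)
Your proposal is correct and follows essentially the same route as the paper: (a) via Lemma~\ref{transformation}, (b) via Theorem~\ref{Four-Conv-Lem}, (c) via Lemma~\ref{most useful lemma} on bounded $y$-intervals $[n,n+1]$ feeding into Theorem~\ref{Nec-Suff Prop}, (d) via integration by parts on $[u,v]$ after noting $f(\pm\infty)=0$, (e)(i) via the uniform-variation limit theorem, and (e)(ii), (f) via Lemma~\ref{HK-LP} together with the Henstock--Kurzweil results in \cite{mendoza2009some}. The only cosmetic discrepancies are citation numbers: the paper invokes Theorem~5.2 (not~3.1) of \cite{mendoza2009some} for the Riemann--Lebesgue lemma and Theorem~4.2 for continuity on $\mathbb{R}\setminus\{0\}$, and in (d) the integration by parts on finite $[u,v]$ comes from Theorem~6.1 of \cite{mahanta2021generalised} rather than Corollary~\ref{Int by parts}.
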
  
	\begin{proof} Proof of (a) and (b) follow from Lemma \ref{transformation} and Theorem \ref{Four-Conv-Lem}, respectively, and proof of (f) follows from Theorem 5.2 of \cite{mendoza2009some} and Lemma \ref{HK-LP}. So we shall prove the rest.
		\begin{enumerate}[\upshape (a)]
			\setcounter{enumi}{2} 
			\item Let $A=\mathbb{R}$ and $B=[n,n+1]$, $n\in\mathbb{Z}$. Let $h(x)=(-2\pi ix)f(x)$ and let $G(x,y)=e^{-2\pi ixy}$. Then
			\begin{enumerate}[\upshape (i)]
				\item $g(x,y)=\partial_{x}G(x,y)=(-2\pi iy)e^{-2\pi ixy}$,
				\item $V_{[a,b]}[g(\cdot\,,y)]=2\pi (b-a)y\in L^{1}(B)$, for any compact interval $[a,b]\subseteq A$,
				\item $\left|g(x,y)\right|=2\pi y \in L^{1}(B)$ and $\left|G(x,y)\right|=1 \in L^{1}(B)$ for $(x,y)\in A\times B$, and
				\item $\mathrm{I}_{2}(A,B)=\int_{x\in A}\int_{y\in B}h(x)G(x,y)\,dydx=[\widehat{f}(n+1)-\widehat{f}(n)]\in \mathbb{R}$
			\end{enumerate}
			which implies $\mathrm{I}_{1}(A,B)=\int_{y\in B}\int_{x\in A}h(x)G(x,y)\,dxdy=\mathrm{I}_{2}(A,B)$ (see Lemma \ref{most useful lemma}). Hence, we get
			\begin{equation}\label{derFourTr}
			\int_{s}^{t}\int_{-\infty}^{\infty}h(x)G(x,y)\,dxdy=\int_{-\infty}^{\infty}\int_{s}^{t}h(x)G(x,y)\,dxdy,
			\end{equation}
			where $s,t\in [n,n+1]$. Now, as $\widehat{f}(y)=\int_{-\infty}^{\infty}H(x,y)\,dx$, where $H(x,y)=f(x)G(x,y)$, and $\partial_{y} H(x,y)=h(x)G(x,y)$, by \eqref{derFourTr} and Theorem \ref{Nec-Suff Prop}, we have $\dfrac{d\widehat{f}}{dy}=\widehat{h}$ a.e. on $[n,n+1]$ for all $n\in \mathbb{Z}$ and hence a.e. on $\mathbb{R}$.		
			
			\item As $f, f'$ are integrable, $\lim_{|x|\rightarrow \infty}f(x)=0$. For $[u,v]\subseteq \mathbb{R}$, we have
			\begin{equation*}
			\int_{u}^{v}f'(x)e^{-2\pi ixy}\,dx= \left[e^{-2\pi ixy}f(x)\right]_{u}^{v}+(2\pi iy)\int_{u}^{v}f(x)e^{-2\pi ixy}\,dx.    	
			\end{equation*} 
			Now taking $u\rightarrow -\infty$ and $v\rightarrow \infty$, we arrive at our conclusion.
			\item 
			\begin{enumerate}[\upshape (i)] 
				\item Let $a\in(0,\infty)$ be such that $f(x)=0$ for $|x|>a$. Let $y_{0}\in \mathbb{R}$ be arbitrary and let $I_{y_{0}}=[y_{0}-1,y_{0}+1]$. If we denote, $g_{y}(x)=e^{-2\pi iyx}$, then it is easy to verify that the set $\{g'_{y}\mid y\in I_{y_{0}}\}$ is of uniform variation. Hence, by Theorem 7.2 of \cite{mahanta2021generalised}, we get
				\begin{align*}
				\lim\limits_{y\rightarrow y_{0}}\widehat{f}(y)&=\lim\limits_{y\rightarrow y_{0}}\int_{-\infty}^{\infty}f(x)e^{-2\pi iyx}\,dx\\
				&=\lim\limits_{y\rightarrow y_{0}}\int_{-a}^{a}f(x)e^{-2\pi iyx}\,dx=\widehat{f}(y_{0}).
				\end{align*}
				\item Let $a\in (0,\infty)$ be such that $f\in \mathcal{BV}(\mathbb{R}\setminus(-a,a))$. Note that $f=f_{1}+f_{2}+f_{3}$, where
				\[
				f_{1}=f\chi_{[-\infty,-a]},\qquad f_{2}=f\chi_{[-a,a]},\qquad f_{3}=f\chi_{[a,\infty]}.
				\]
				To prove $\widehat{f}$ is continuous on $\mathbb{R}\setminus\{0\}$ it is enough to show that $\widehat{f_{1}},\widehat{f_{3}}$ are continuous on $\mathbb{R}\setminus\{0\}$. Now Lemma \ref{HK-LP} implies $f_{1},f_{3}\in \mathcal{HK}(\mathbb{R})\cap\mathcal{BV}(\mathbb{R})$ and hence Theorem 4.2 of \cite{mendoza2009some} completes the proof.\qedhere
			\end{enumerate}
		\end{enumerate}
	\end{proof}

	\begin{lemma}\label{interchanging fourier cap}
		Let $\psi$ and $\phi$ be real-valued function on $\mathbb{R}$ and let $\widehat{\psi}$ exists a.e. on $\mathbb{R}$. If $\phi(y)$, $y\phi(y)$ and $y^{2}\phi(y)$ are Lebesgue integrable on $\mathbb{R}$ and if $\int_{-\infty}^{\infty}\psi\widehat{\phi}$ exists, then $\int_{-\infty}^{\infty}\psi\widehat{\phi}=\int_{-\infty}^{\infty}\widehat{\psi}\phi$.
	\end{lemma}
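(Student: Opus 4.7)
The identity to prove is a Fubini-style exchange
\[
\int_{-\infty}^{\infty}\psi(x)\widehat{\phi}(x)\,dx = \int_{-\infty}^{\infty}\widehat{\psi}(y)\phi(y)\,dy,
\]
and the natural tool is the strong Fubini statement Lemma \ref{most useful lemma}. Since that lemma is stated for real-valued $f,G$, my first step is to split the kernel $e^{-2\pi ixy}=\cos(2\pi xy)-i\sin(2\pi xy)$ and reduce the claim to two real identities involving the cosine and sine kernels. By linearity it suffices to treat the cosine case; the sine case is completely parallel.

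For the cosine case, take $f(x)=\psi(x)$ and $G(x,y)=\phi(y)\cos(2\pi xy)$, so that $g(x,y):=\partial_{x}G(x,y)=-2\pi y\phi(y)\sin(2\pi xy)$. Choose $A=B=(-\infty,\infty)\in\mathfrak{I}$. The iterated integral
\[
\mathrm{I}_{2}(\mathbb{R},\mathbb{R})=\int_{-\infty}^{\infty}\psi(x)\int_{-\infty}^{\infty}\phi(y)\cos(2\pi xy)\,dy\,dx
\]
is the real part of the assumed-existing integral $\int\psi\widehat{\phi}$ and hence exists. The three hypotheses of Lemma \ref{most useful lemma} correspond one-for-one to the three $L^{1}$-assumptions on $\phi$: $|G(x,y)|\leqslant|\phi(y)|\in L^{1}(\mathbb{R})$ handles the envelope on $G$; $|g(x,y)|\leqslant 2\pi|y\phi(y)|\in L^{1}(\mathbb{R})$ handles the envelope on $g$; and because $g(\cdot,y)$ is $C^{1}$ with $|\partial_{x}g(x,y)|\leqslant 4\pi^{2}y^{2}|\phi(y)|$, any bounded interval $O$ satisfies $V_{O}[g(\cdot,y)]\leqslant 4\pi^{2}|O|\cdot y^{2}|\phi(y)|\in L^{1}(\mathbb{R})$, giving the variation condition.

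Applying the ``unbounded $A$'' addendum of Lemma \ref{most useful lemma} then yields $\mathrm{I}_{1}(\mathbb{R},\mathbb{R})=\mathrm{I}_{2}(\mathbb{R},\mathbb{R})$. Since $\int_{-\infty}^{\infty}\psi(x)\cos(2\pi xy)\,dx=\mathrm{Re}\,\widehat{\psi}(y)$ exists for a.e.\ $y$ by the assumed a.e.\ existence of $\widehat{\psi}$, the quantity $\mathrm{I}_{1}(\mathbb{R},\mathbb{R})$ is precisely the real part of $\int\widehat{\psi}\phi$. Repeating the argument with $G(x,y)=\phi(y)\sin(2\pi xy)$ and recombining the two parts with the correct $-i$ factor yields the stated identity.

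I expect the main obstacle to be cosmetic rather than conceptual: verifying the three hypotheses of Lemma \ref{most useful lemma} is straightforward once the three $L^{1}$-assumptions $\phi,y\phi,y^{2}\phi\in L^{1}(\mathbb{R})$ are matched to the envelopes for $G$, $g$, and $\partial_{x}g$. The only small subtlety is checking that the a.e.\ existence of $\widehat{\psi}$ suffices to make the inner integral in $\mathrm{I}_{1}(\mathbb{R},\mathbb{R})$ well-defined almost everywhere, which is immediate from the definition of $\widehat{\psi}$.
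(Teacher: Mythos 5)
Your proof is correct and is essentially the paper's own argument: apply Lemma \ref{most useful lemma} with $f=\psi$ and $G(x,y)=\phi(y)e^{-2\pi ixy}$, matching $|G|\leqslant|\phi|$, $|g|\leqslant 2\pi|y\phi|$, and $V_{O}[g(\cdot,y)]\leqslant 4\pi^{2}|O|\,y^{2}|\phi(y)|$ to the three $L^{1}$ hypotheses, then invoke the unbounded-interval addendum using the assumed existence of $\int\psi\widehat{\phi}$. Your explicit cosine/sine splitting is only a cosmetic refinement (the paper applies the lemma directly to the complex kernel), so the approaches coincide.
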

	\begin{proof}
		Let $G(x,y)=\phi(y)e^{-2\pi iyx}$ and $g(x,y)=\partial_{x}G(x,y)$. Then 
		\begin{enumerate}[\upshape (a)] 
			\item $V_{[a,b]}[g(\cdot\,,y)]=\int_{a}^{b}\left|\partial_{x}g(x,y)\right|\,dx=\int_{a}^{b}4\pi^{2}y^{2}\left|\phi(y)\right|\,dx=4\pi^{2}(b-a)y^{2}\left|\phi(y)\right|$;
			\item $\int_{-\infty}^{\infty}V_{[a,b]}[g(\cdot\,,y)]\,dy=4\pi^{2}(b-a)\int_{-\infty}^{\infty}y^{2}\left|\phi(y)\right|\,dy=O_{[a,b]}<\infty$;
			\item $\left|g(x,y)\right|=2\pi\left|y\phi(y)\right|=\eta(y)\in L^{1}(\mathbb{R})$;
			\item $\left|G(x,y)\right|=\left|\phi(y)\right|=\kappa(y)\in L^{1}(\mathbb{R})$. 
		\end{enumerate}
		Therefore, by Lemma \ref{most useful lemma}, we get $\int_{-\infty}^{\infty}\psi\widehat{\phi}=\int_{-\infty}^{\infty}\widehat{\psi}\phi$.
	\end{proof}
	\noindent Let us denote
	\begin{equation}
	\widetilde{f}(x)=\int_{-\infty}^{\infty}\widehat{f}(t)e^{2\pi ixt}\,dt
	\end{equation}

	\begin{theorem}[{\bf The Inversion Theorem}]
		Let $f\colon\mathbb{R}\to\mathbb{R}$ be such that $\widehat{f}$ exists almost everywhere on $\mathbb{R}$. Let $x_{0}\in\mathbb{R}$ be such that $\widetilde{f}$ exists at $x_{0}$. Then $f(x_{0})=\widetilde{f}(x_{0})$ provided    	
		\begin{equation}\label{inversion condition 1}
		\sup\limits_{x\in[-\delta,\delta]}\left|s\int_{-\delta}^{x}e^{-s|t|}\left(f(x_{0}+t)-f(x_{0})\right)\,dt\right|\rightarrow 0\qquad\text{as $s\rightarrow \infty$}
		\end{equation}
		for some $\delta(>0)$.
		
	\end{theorem}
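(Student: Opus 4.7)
The plan is to introduce a Laplace--Poisson type summability factor that reduces the inversion identity to a convolution of $f$ against the kernel $\frac{s}{2}e^{-s|x-x_{0}|}$, so that hypothesis \eqref{inversion condition 1} can finish the job. Set $m_{s}(y)=s^{2}/(s^{2}+4\pi^{2}y^{2})$ and $\phi_{s}(y)=m_{s}(y)e^{2\pi ix_{0}y}$ for $s>0$. A short computation using the Fourier transform of $e^{-s|\cdot|}$ gives $\widehat{\phi_{s}}(x)=\frac{s}{2}e^{-s|x-x_{0}|}$, a kernel of total mass one that concentrates at $x_{0}$ as $s\to\infty$. Moreover, the scaling relation $m_{s}(y)=m_{1}(y/s)$ yields $\|m_{s}'\|_{1}=V_{\mathbb{R}}[m_{1}]$ and $V_{\mathbb{R}}[m_{s}']=V_{\mathbb{R}}[m_{1}']/s$, both uniformly bounded in $s\geqslant 1$.

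The first substantive step is to establish the interchange
\begin{equation*}
\int_{-\infty}^{\infty}\widehat{f}(y)\phi_{s}(y)\,dy=\int_{-\infty}^{\infty}f(x)\,\tfrac{s}{2}e^{-s|x-x_{0}|}\,dx.
\end{equation*}
Since $y^{2}\phi_{s}\notin L^{1}(\mathbb{R})$, Lemma \ref{interchanging fourier cap} does not apply directly to $\phi_{s}$. I would insert a compactly supported cut-off $\phi_{s}^{N}(y)=\phi_{s}(y)\chi_{[-N,N]}(y)$, which trivially satisfies the moment hypothesis, apply Lemma \ref{interchanging fourier cap} with $\psi=f$ and $\phi=\phi_{s}^{N}$, and then let $N\to\infty$. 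On the Fourier side, the truncated primitives have uniformly bounded variation, so Theorem \ref{sequence of integrals} gives $\int\widehat{f}\,\phi_{s}^{N}\,dy\to\int\widehat{f}\,\phi_{s}\,dy$. On the spatial side, $\phi_{s}^{N}\to\phi_{s}$ in $L^{1}$ by dominated convergence, whence $\widehat{\phi_{s}^{N}}\to\widehat{\phi_{s}}$ uniformly, and a separate tail estimate handles $\int f\,\widehat{\phi_{s}^{N}}\,dx\to\int f\,\widehat{\phi_{s}}\,dx$.

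With the key identity in hand, pass to the limit $s\to\infty$. On the left, $m_{s}\to 1$ pointwise with the uniform $BV$ control noted above, and since $\widehat{f}(y)e^{2\pi ix_{0}y}\in\mathcal{LP}(\mathbb{R})$ by the hypothesis that $\widetilde{f}(x_{0})$ exists, Theorem \ref{sequence of integrals} yields $\int\widehat{f}(y)\phi_{s}(y)\,dy\to\widetilde{f}(x_{0})$. On the right, using $\int\frac{s}{2}e^{-s|u|}\,du=1$, write
\begin{equation*}
\int f(x)\,\tfrac{s}{2}e^{-s|x-x_{0}|}\,dx-f(x_{0})=\tfrac{s}{2}\int_{-\infty}^{\infty}\bigl[f(x_{0}+u)-f(x_{0})\bigr]e^{-s|u|}\,du,
\end{equation*}
and split the integral at $|u|=\delta$. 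The inner part is exactly what hypothesis \eqref{inversion condition 1} controls at $x=\delta$, so it tends to $0$; the tail contribution is dominated by $e^{-s\delta}$ and vanishes under the mild growth of $f$ forced by the existence of $\widehat{f}$. Comparing the two limits produces $\widetilde{f}(x_{0})=f(x_{0})$.

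The crux of the argument is the rigorous derivation of the key identity in the second step: the natural multiplier $\phi_{s}$ sits just outside the scope of Lemma \ref{interchanging fourier cap}, so a regularisation-and-limit argument is required, and two nested limits ($N\to\infty$ and then $s\to\infty$) must be commuted carefully. A secondary concern is making the tail estimate on the spatial side truly rigorous; this may require a separate truncation in $x$ combined with the Cauchy criterion (Theorem \ref{Cauchy criterion}) to show that the far-field contribution of $f$ against the Laplace kernel vanishes uniformly in $s$ without additional hypotheses beyond those already present.
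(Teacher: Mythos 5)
Your choice of the Abel--Poisson multiplier $m_{s}(y)=s^{2}/(s^{2}+4\pi^{2}y^{2})$ is attractive because its transform $\tfrac{s}{2}e^{-s|x-x_{0}|}$ matches hypothesis \eqref{inversion condition 1} exactly, so your endgame (split at $|u|=\delta$, use \eqref{inversion condition 1} on the inner part) is cleaner than the paper's. But the step you yourself flag as the crux --- the interchange $\int\widehat{f}\,\phi_{s}=\int f\,\widehat{\phi_{s}}$ --- is a genuine gap, and your regularisation does not close it. First, Lemma \ref{interchanging fourier cap} fails for $\phi_{s}$ not only because $y^{2}\phi_{s}\notin L^{1}$ but already because $y\phi_{s}(y)\sim c/y$ is not integrable. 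Second, after truncating to $\phi_{s}^{N}=\phi_{s}\chi_{[-N,N]}$, your passage to the limit on the spatial side is unjustified: $f$ is only Laplace (conditionally) integrable, so uniform convergence $\widehat{\phi_{s}^{N}}\to\widehat{\phi_{s}}$ gives no control of $\int_{\mathbb{R}}f\,\widehat{\phi_{s}^{N}}\to\int_{\mathbb{R}}f\,\widehat{\phi_{s}}$; the only tool available for such limits here is Theorem \ref{sequence of integrals}, which would require the derivatives $\tfrac{d}{dx}\widehat{\phi_{s}^{N}}$ to be of uniform bounded variation in $N$, and their variation is comparable to $\int_{-N}^{N}4\pi^{2}y^{2}|\phi_{s}(y)|\,dy\sim N$, which blows up. You also never verify that $\int f\,\widehat{\phi_{s}^{N}}$ exists (a hypothesis of Lemma \ref{interchanging fourier cap}), and $\widehat{\phi_{s}^{N}}$ is an oscillatory, non-absolutely-decaying Dirichlet-type kernel, so this is not automatic. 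Finally, the tail claim that $|u|\geqslant\delta$ is handled by ``the mild growth of $f$ forced by the existence of $\widehat{f}$'' is not a valid argument (existence of a conditionally convergent integral gives no pointwise growth bound); this particular piece is fixable via Theorem \ref{sequence of integrals}, since $\tfrac{s}{2}e^{-s|\cdot|}$ restricted to $|u|\geqslant\delta$ has $L^{1}$-norm and variation tending to $0$, but the interchange identity itself remains unproved.

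For comparison, the paper sidesteps exactly this obstruction by summing with the Gaussian $e^{-\lambda^{2}\pi t^{2}}$, whose first and second moments are integrable, so Lemma \ref{interchanging fourier cap} applies directly with no cut-off; the price is that the spatial kernel $\lambda^{-1}e^{-\pi t^{2}/\lambda^{2}}$ no longer matches \eqref{inversion condition 1}, which the paper repairs by factoring the integrand as $g_{\lambda}(t)h_{\lambda}(t)$ with $g_{\lambda}(t)=\lambda^{-1}e^{-t/\lambda}(f(t)-f(0))$ and invoking the H\"older-type Lemma \ref{Holder} together with \eqref{inversion condition 1}. If you want to keep your kernel, you would need either a genuinely new argument for the interchange (not a blunt truncation) or to accept the Gaussian route; as written, the proposal does not yet prove the theorem.
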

	\begin{proof}
		For simplicity, we assume $x_{0}=0$. As $e^{-t^{2}}$, $te^{-t^{2}}$ and $t^{2}e^{-t^{2}}$ belong to $L^{1}(\mathbb{R})$, by Lemma \ref{interchanging fourier cap}, we get
		\[
		\int_{-\infty}^{\infty}e^{-\lambda^{2}\pi t^{2}}\widehat{f}(t)\,dt=\int_{-\infty}^{\infty}\widehat{e^{-\lambda^{2}\pi t^{2}}}f(t)\,dt.
		\]
		Now as $G_{\lambda}(t)=\frac{d}{dt}(e^{-\lambda^{2}\pi t^{2}})\in L^{1}(\mathbb{R})\cap\mathcal{BV}(\mathbb{R})$ and the set $\{V_{\mathbb{R}}[G_{\lambda}]\mid 0\leqslant \lambda \leqslant 1\}$ is uniformly bounded on $\mathbb{R}$,
		Theorem \ref{sequence of integrals} implies
		\begin{equation}\label{Inversion eqn 1}
		\lim\limits_{\lambda\rightarrow 0^{+}}\int_{-\infty}^{\infty}e^{-\lambda^{2}\pi t^{2}}\widehat{f}(t)\,dt=\int_{-\infty}^{\infty}\widehat{f}(t)\,dt=\widetilde{f}(0).
		\end{equation}
		Let $\delta>0$ be such that \eqref{inversion condition 1} holds. 
		Then
		\begin{align*}
		\int_{-\infty}^{\infty}\widehat{e^{-\lambda^{2}\pi t^{2}}}f(t)\,dt&=\int_{-\infty}^{\infty}\lambda^{-1}e^{-\pi t^{2}/\lambda^{2}}f(t)\,dt\\
		&=\int\limits_{\left|t\right|<\delta}\lambda^{-1}e^{-\pi t^{2}/\lambda^{2}}f(t)\,dt + \int\limits_{\left|t\right|\geqslant\delta}\lambda^{-1}e^{-\pi t^{2}/\lambda^{2}}f(t)\,dt.
		\end{align*}
		Let $I_{1}=\int_{\left|t\right|<\delta}\lambda^{-1}e^{-\pi t^{2}/\lambda^{2}}f(t)\,dt$, $I_{2}=\int_{\left|t\right|\geqslant\delta}\lambda^{-1}e^{-\pi t^{2}/\lambda^{2}}f(t)\,dt$ and $J_{\delta}=\mathbb{R}\setminus(-\delta, \delta)$. Then by simple calculation, we can prove that
		\[
		H_{\lambda}(t)=\frac{d}{dt}(\lambda^{-1}e^{-\pi t^{2}/\lambda^{2}})\in L^{1}(\mathbb{R})\cap\mathcal{BV}(\mathbb{R})
		\]
		and the set $\{V_{\mathbb{R}}[H_{\lambda}]\mid 0\leqslant \lambda \leqslant \epsilon\}$ is uniformly bounded on $J_{\delta}$, where $\epsilon\in [0,1)$ is sufficiently small. Thus again,
		Theorem \ref{sequence of integrals} implies that $I_{2}\rightarrow 0$ as $\lambda\rightarrow 0^{+}$. Now 
		\begin{align*}
		I_{1}&=f(0)\int_{-\delta}^{\delta}\lambda^{-1}e^{-\pi t^{2}/\lambda^{2}}\,dt + \int_{-\delta}^{\delta}\lambda^{-1}e^{-t/\lambda}(f(t)-f(0))e^{-\pi t^{2}/\lambda^{2}\, +\, t/\lambda}\,dt\\
		&=f(0)\int_{-\delta}^{\delta}\lambda^{-1}e^{-\pi t^{2}/\lambda^{2}}\,dt + \int_{-\delta}^{\delta}g_{\lambda}(t)h_{\lambda}(t)\,dt,
		\end{align*}
		where $g_{\lambda}(t)=\lambda^{-1}e^{-t/\lambda}(f(t)-f(0))$ and $h_{\lambda}(t)=e^{-\pi t^{2}/\lambda^{2}\, +\, t/\lambda}$. By Lemma \ref{Holder}, we get 
		\begin{equation*}
		\left| \int_{-\delta}^{\delta}g_{\lambda}(t)h_{\lambda}(t)\,dt \right|\leqslant\left[ \inf\limits_{[-\delta,\delta]}\left|h_{\lambda}(t)\right| + V_{[-\delta,\delta]}[h_{\lambda}] \right]\|g_{\lambda}\|_{[-\delta,\delta]}.
		\end{equation*}
		For sufficiently small $\lambda$, it can be proved that $V_{[-\delta,\delta]}[h_{\lambda}]$ is independent of $\lambda$. Thus by \eqref{inversion condition 1}, we get
		\begin{align}\label{Inversion eqn 2}
		\begin{split}
		\lim\limits_{\lambda\rightarrow 0^{+}}\int_{-\infty}^{\infty}&\widehat{e^{-\lambda^{2}\pi t^{2}}}f(t)\,dt=\lim\limits_{\lambda\rightarrow 0^{+}} I_{1}=\lim\limits_{\lambda\rightarrow 0^{+}}f(0)\int_{-\delta}^{\delta}\lambda^{-1}e^{-\pi t^{2}/\lambda^{2}}\,dt\\
		&=f(0)\pi^{-1/2}\lim\limits_{\lambda\rightarrow 0^{+}}\int_{0}^{\frac{\pi\delta^{2}}{\lambda^{2}}}t^{-1/2}e^{-t}\,dt=f(0)\pi^{-1/2}\Gamma(1/2)=f(0).
		\end{split}
		\end{align}
		Equating \eqref{Inversion eqn 1} and \eqref{Inversion eqn 2}, we get $f(0)=\int_{-\infty}^{\infty}\widehat{f}(t)\,dt$.
	\end{proof}
	
	\begin{corollary}
		Let $f\colon\mathbb{R}\to\mathbb{R}$ be such that $\widehat{f}=0$ a.e. on $\mathbb{R}$. Then $f=0$ a.e. on $\mathbb{R}$.
	\end{corollary}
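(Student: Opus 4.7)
The plan is to apply the Inversion Theorem at almost every point $x_0\in\mathbb{R}$. Step one is to show that $\widetilde{f}\equiv 0$ on $\mathbb{R}$. Since $\widehat{f}=0$ a.e., for each fixed $x\in\mathbb{R}$ the integrand $\widehat{f}(t)e^{2\pi ixt}$ vanishes almost everywhere in $t$; because a null function is Laplace integrable on every bounded interval with integral zero, Definition~\ref{Laplace integral} gives $\widetilde{f}(x)=0$ for every $x\in\mathbb{R}$. Consequently, at any $x_0$ where the hypothesis \eqref{inversion condition 1} of the Inversion Theorem can be verified, one immediately obtains $f(x_0)=\widetilde{f}(x_0)=0$.

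Step two is to identify a set of full measure on which \eqref{inversion condition 1} holds. The discussion following \eqref{Fourier Transform} forces $f\in\mathcal{LP}_{\mathrm{loc}}(\mathbb{R})$, since $\widehat{f}$ is assumed to exist. Applying Theorem~\ref{Fundmental} to the primitive of $f$ on each bounded subinterval of $\mathbb{R}$, I conclude that $f$ is Laplace continuous at almost every $x_0$, i.e.\ both one-sided means $s\int_{0}^{\delta}e^{-st}f(x_0\pm t)\,dt$ converge to $f(x_0)$ as $s\to\infty$. Fix such an $x_0$, set $g(t)=f(x_0+t)-f(x_0)$, and for $x\in[0,\delta]$ decompose
\[
s\int_{-\delta}^{x}e^{-s|t|}g(t)\,dt = s\int_{-\delta}^{0}e^{st}g(t)\,dt + s\int_{0}^{x}e^{-st}g(t)\,dt,
\]
with the analogous decomposition for $x\in[-\delta,0]$. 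The first term tends to $0$ independently of $x$ by Laplace continuity; to control the second uniformly in $x$, I would apply Lemma~\ref{Holder} to $g$ paired with the Laplace-type kernel on $[0,x]$, bounding the supremum by the variation plus infimum of $e^{-st}$ (uniformly controllable on $[0,\delta]$) times an Alexiewicz-type norm of $g$ that is forced to vanish by Laplace continuity.

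The main obstacle is precisely this uniformity in $x\in[-\delta,\delta]$: Laplace continuity furnishes only the pointwise limit at $x=\pm\delta$, whereas \eqref{inversion condition 1} demands the supremum. I expect Lemma~\ref{Holder} to be the key tool, since its H\"older-type structure converts an endpoint estimate into a supremum bound, while shrinking $\delta$ if necessary ensures that $\sup_{r\in[0,\delta]}\bigl|\int_{0}^{r}g\bigr|$ is arbitrarily small. Once \eqref{inversion condition 1} is established at every Laplace continuity point of $f$, which form a set of full measure, the Corollary follows from the Inversion Theorem.
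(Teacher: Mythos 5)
Your overall strategy is the same as the paper's: observe that $\widetilde f\equiv 0$ because the integrand vanishes a.e., obtain Laplace continuity of $f$ at almost every point, and invoke the Inversion Theorem there. The paper gets a.e.\ Laplace continuity directly from Corollary 6.2 of \cite{mahanta2021generalised}; your route through Theorem \ref{Fundmental} gives $LD_{1}F=f$ a.e.\ and needs the short integration-by-parts identity
\begin{equation*}
s\int_{0}^{\delta}e^{-st}f(x+t)\,dt=se^{-s\delta}\bigl[F(x+\delta)-F(x)\bigr]+s^{2}\int_{0}^{\delta}e^{-st}\bigl[F(x+t)-F(x)\bigr]\,dt
\end{equation*}
to convert that into $LD_{0}f=f$ at such points; this is a minor omission and easily repaired.

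The genuine gap is exactly the step you flag yourself: passing from Laplace continuity at $x_{0}$ to the uniform condition \eqref{inversion condition 1}, and your proposed use of Lemma \ref{Holder} cannot close it. Applied to $g(t)=f(x_{0}+t)-f(x_{0})$ against the kernel $se^{-s|t|}$ on $[0,x]$, the lemma yields the bound $\bigl[\inf_{[0,x]}se^{-st}+V_{[0,x]}[se^{-st}]\bigr]\|g\|_{[0,x]}=s\,\|g\|_{[0,x]}$, and the Alexiewicz norm $\|g\|_{[0,\delta]}$ is a fixed positive number: Laplace continuity does not force it to vanish, and shrinking $\delta$ makes it small but not $o(1/s)$, so the bound diverges as $s\to\infty$. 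Moreover, pointwise Laplace continuity alone genuinely does not imply \eqref{inversion condition 1}: take $g=h'$ on $(0,\delta]$ with $h(t)=t\sin(1/t)$, $h(0)=0$, and $g=0$ for $t\leqslant 0$; then $s\int_{0}^{\delta}e^{-st}g(t)\,dt\to 0$ (so $g$ is Laplace continuous at $0$ with value $0$), yet $s\int_{0}^{1/s}e^{-st}g(t)\,dt=e^{-1}\sin s+o(1)$, so the supremum over $x\in[-\delta,\delta]$ stays bounded away from $0$ along a sequence $s\to\infty$. Hence some property of Laplace integrable functions finer than a.e.\ Laplace continuity is required at this point (the paper itself passes over it by assertion after citing Corollary 6.2 of \cite{mahanta2021generalised}), and as written your argument does not establish \eqref{inversion condition 1} almost everywhere, which is the crucial step of the proof.
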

	
	\begin{proof}
		Let $I_{n}=[n,n+1]$ for $n\in \mathbb{Z}$. Then $f\in \mathcal{LP}(I_{n})$ for all $n\in \mathbb{Z}$. Now, Corollary 6.2 of \cite{mahanta2021generalised} implies that $f$ is Laplace continuous a.e. on $I_{n}$ and hence \eqref{inversion condition 1} is satisfied a.e. on $I_{n}$ for all $n\in \mathbb{Z}$. Since $\widehat{f}=0$ a.e. on $\mathbb{R}$, we obtain $f=0$ a.e. on $I_{n}$ for all $n\in \mathbb{Z}$ which completes the proof.
	\end{proof}

	\section{Conclusions}
	The definition of Laplace integral depends on a generalised derivative called the Laplace derivative. Suppose it is possible to define the total Laplace derivative on $\mathbb{R}^{n}$ ($n\geqslant 2$) and establish its interrelations with the partial Laplace derivatives. In that case, it may be possible to find excellent applications of the Fourier transform (in our setting) to the generalised PDEs, i.e., to the PDEs using partial Laplace derivatives, which, we hope, will be an interesting problem to deal with.\vspace{3mm}

    \noindent {\bf Acknowledgements}. The UGC fellowship of India supports this research work of the first author (Serial no.- 2061641179, Ref. no.- 19/06/2016(i) EU-V and Roll no.- 424175 under the UGC scheme).

    \bibliographystyle{plain} 
    \bibliography{MyBibliography112.bib}
	
\end{document}